\newcommand{\reff}[1]{(\ref{#1})}
\theoremstyle{plain}
\newtheorem{theo}{Theorem}[section]
\newtheorem{cor}[theo]{Corollary}
\newtheorem{prop}[theo]{Proposition}
\newtheorem{lem}[theo]{Lemma}
\theoremstyle{remark}
\newcommand{\cp}{{\mathcal P}}
\newcommand{\cy}{{\mathcal Y}}
\newcommand{\E}{{\mathbb E}}
\newcommand{\N}{{\mathbb N}}
\renewcommand{\P}{{\mathbb P}}
\newcommand{\llambda}{{\Lambda}}
\newcommand{\ind}{{\bf 1}}
\newcommand{\val}[1]{\mathop{\left| #1 \right|}\nolimits}
\newcommand{\inv}[1]{\mathop{\frac{1}{ #1}}\nolimits}
\newcommand{\expp}[1]{\mathop {\mathrm{e}^{ #1}}}
\newcommand{\ixen}{X^{(n)}}
\newcommand{\igreken}{Y^{(n)}}
\begin{document}

\title{Asymptotic results on the length of  coalescent trees}

\date{\today}

 \author{Jean-Fran\c cois Delmas}
 \address{CERMICS, \'Ecole des Ponts, ParisTech, 6-8
av. Blaise Pascal, 
  Champs-sur-Marne, 77455 Marne La Vall\'ee, France.}
 \email{delmas@cermics.enpc.fr}

\author{Jean-St\'ephane Dhersin}
\address{UFR de Math\'ematiques et d'Informatique, Universit\'e Ren\'e Descartes,  45 rue des Saints P\`eres, 75270 Paris Cedex 06, France}
\email{dhersin@math-info.univ-paris5.fr}
\author{Arno Siri-Jegousse}
\address{UFR de Math\'ematiques et d'Informatique, Universit\'e Ren\'e Descartes,  45 rue des Saints P\`eres, 75270 Paris Cedex 06, France}
\email{Arno.Jegousse@math-info.univ-paris5.fr}

\begin{abstract}
  We  give  the asymptotic  distribution  of  the  length of    partial
  coalescent trees for Beta and related coalescents. This allows us to
  give the asymptotic distribution  of the number of (neutral)
  mutations in the partial tree. This is a first step to study the
  asymptotic distribution of a natural  estimator of  DNA mutation rate for
  species with large families. 

\end{abstract}

\keywords{Coalescent process, Beta-coalescent, stable process, Watterson
estimator}

\subjclass[2000]{60F05, 60G52, 60J70, 05C05}

\maketitle

\section{Introduction}
\subsection{Motivations}

The Kingman coalescent, see \cite{k:c,k:p}, allows to describe the genealogy
of $n$ individuals in a Wright-Fisher  model, when the size of the whole
population is very large and time is well rescaled.  In what follows, we
consider  only neutral mutations  and the  infinite allele  model, where
each   mutation   gives  a   new   allele.    The  Watterson   estimator
\cite{w:nssgmwr},  based on  the  number of  different alleles  observed
among $n$ individuals alive today, $K^{(n)} $, allows to estimate the rate of
mutation  for  the DNA,  $\theta$.   This  estimator  is consistent  and
converges at rate $1/\sqrt{\log(n)}$.

Other  models of  population where  one individual  can produce  a large
number of children  give rise to more general  coalescent processes than
the  Kingman coalescent,  where multiple  collisions appear,  see Sagitov
\cite{s:gcamal}  and Schweinsberg \cite{s:cposgwp}  (such models  may be
relevant  for  oysters and  some  fish  species
\cite{bbb:mdviipopo,ew:cpwtdoonaiihs}).   In Birkner  and  al. 
\cite{bbcemsw} and in Schweinsberg \cite{s:cposgwp} a natural family of
one parameter  coalescent processes arise  to describe the  genealogy of
such populations: the Beta  coalescent with parameter $\alpha\in (1,2)$.
Results  from Beresticky  and  al.  \cite{bbs:stpbc}  give a  consistent
estimator,  based  on the  observed  number,  $K^{(n)}  $, of  different
alleles for the rate $\theta$ of mutation of DNA.  This paper is a first
step to study the convergence  rate of this estimator or equivalently to
the study the  asymptotic distribution of $K^{(n)} $.   Results are also
known for the asymptotic distribution of $K^{(n)} $ for other coalescent
processes,   see    Drmota   and   al.     \cite{dimr:rrc}   and   M\"ohle
\cite{m:nssplfs}.

For the Beta  coalescent, the asymptotic distribution of  $K^{(n)} $ depends on
$\theta$  but also  on the  parameter $\alpha$.   In particular,  if the
mutation rate  of the DNA is  known, the asymptotic  distribution of $K^{(n)} $
allows to deduce  an estimation and a confidence  interval for $\alpha$,
which in a sense characterize the  size of a typical family according to
\cite{s:cposgwp}.

\subsection{The coalescent tree and mutation rate}
We consider  at time $t=0$  a number, $n\geq  1$ of individuals,  and we
look backward  in time. Let  $\cp_n$ be the  set of partitions  of $\{1,
\ldots, n\}$.  For $t\geq 0$, let $\Pi^{(n)}_t$ be an element of $\cp_n$
such  that  each  block  of  $\Pi^{(n)}_t$  corresponds  to  the  initial
individuals which have a common ancestor at time $-t$.  We assume that if
we   consider  $b$  blocks,   $k$  of   them  merge   into  1   at  rate
$\lambda_{b,k}$,  independent of  the current  number of  blocks.  Using
this property and the compatibility relation implied when one consider a
larger  number of  initial  individuals, Pitman  \cite{p:cmc}, see  also
Sagitov  \cite{s:gcamal}  for a  more  biological  approach, showed  the
transition rates are given by
\[
\lambda_{b,k}=\int_{(0,1)} x^{k-2} (1-x)^{b-k} \Lambda(dx),\quad 2\leq
k\leq b, 
\]
for some  finite measure $\Lambda$  on $[0,1]$, and that  $\Pi^{(n)}$ is
the restriction of  the so-called coalescent process defined  on the set
of partitions of $\N^*$.  The  Kingman coalescent correspond to the case
where  $\Lambda$  is  the  Dirac   mass  at  $0$,  see  \cite{k:c}.   In
particular, in the Kingman coalescent,  only two blocks merge at a time.
The  Bolthausen-Sznitman \cite{bs:rpcacm}  coalescent correspond  to the
case  where   $\Lambda$  is  the   Lebesgue  measure  on   $[0,1]$.  The
Beta-coalescent  introduced in  Birkner  and al.  \cite{bbcemsw} and  in
Schweinsberg   \cite{s:cposgwp},   see   also   Bertoin  and   Le   Gall
\cite{blg:sfcp3} and Beresticky and al.  \cite{bbs:bcsrt} , corresponds
to $\Lambda(dx) =C_0 x^{\alpha-1}(1-x)^{1-\alpha} \ind_{(0,1)}(x) \; dx$
for some constant $C_0>0$.

Notice $\Pi^{(n)}=(\Pi^{(n)}_t,t\geq 0)$ is a Markov process starting at
the  trivial partition  of $\{1,  \ldots,  n\}$ into  $n$ singletons.   We
denote by $R_t^{(n)}$ the number of blocks of $\Pi^{(n)}_t$, that is the
number of  common ancestors alive at  time $-t$.  In  particular we have
$R^{(n)}_0=n$.  We shall  omit the  superscript $(n)$  when there  is no
confusion.  The process $R=(R_t,t\geq  0)$ is  a continuous  time Markov
process  taking values  in $\N^*$.   The number  of possible  choices of
$\ell+1$ blocks  among $k$ is $\binom{k }{\ell+1}$  (for $1\leq \ell\leq
k-1$)   and   each   group    of   $\ell+1$   blocks   merge   at   rate
$\lambda_{k,\ell+1}$.  So the waiting  time of  $R$ in  state $k$  is an
exponential random variable with parameter
\begin{equation}
   \label{eq:gk}
g_k=\sum_{\ell=1}^{k -1} \binom{k }{\ell+1}
\lambda_{k ,\ell+1}=\int_{(0,1)} \Big(1- 
(1-x)^k  -k  x(1-x)^{k -1} \Big) \frac{\Lambda(dx)}{x^2}
\end{equation}
and  is distributed as $E/g_k$, where $E$ is an exponential random
variable with mean $1$. 

The apparition time of the
most recent common ancestor (MRCA) is $T_n=\inf\{t>0; R_t=1\}$.

Let $Y=(Y_k, k\geq 1)$ be the different states of the process $R$. It is
defined  by  $Y_0=R_0$  and  for  $k\geq 1$,  $Y_k=R_{S_k}$,  where  the
sequence  of jumping  time $(S_k,  k\geq 0)$  is defined  inductively by
$S_0=0$ and  for $k\geq  1$, $S_k=\inf\{t>S_{k-1}; R_t\neq  R_{S_{k-1}}\}$. We
use the convention that $\inf \emptyset=+\infty $ and $Y_k=1$ for $k\geq
\tau_n$,  where $\tau_n=\inf\{k; R_{S_k}=1\}$  is the  number of
jumps of the process $R$ until it reach the absorbing state $1$. The
number $\tau_n$ is the number of coalescences.

We shall write $Y^{(n)}$ instead of $Y$ when it will be convenient to
stress that $Y$ starts at time 0 at point $n$. Notice $Y$ is an
$\N^*$-valued  discrete
time Markov
chain, with probability transition
\begin{equation}
   \label{eq:P}
P(k,k-\ell)=\frac{\binom{k}{\ell+1}
\lambda_{k,\ell+1}}{g_k}. 
\end{equation}

The
sum of the lengths of all branches in the coalescent tree until the MRCA
is distributed as  
\[
L^{(n)}=\sum_{k=0}^{\tau_n-1} \frac{Y^{(n)}_{k} }{g_{Y^{(n)}_{k} }}
  E_k,
\]
where $(E_k, k\geq 0)$ are independent exponential random variables with
expectation $1$.

In the infinite allele model, one assume that (neutral) mutations appear
in the genealogy at random with rate $\theta$.  In particular by looking
at the  number $K^{(n)} $ of  different alleles among $n$  individuals, one
get  the number of  mutations which  occured in  the genealogy  of those
individuals  after  the most  recent  common  ancestor.  In  particular,
conditionally  on the  length  of the  coalescent  tree $L^{(n)}$,  the
number $K^{(n)} $ of  mutations is  distributed according to a  Poisson r.v.
with  parameter   $\theta  L^{(n)}$.   Therefore,  we   have  that
$\displaystyle   \frac{K^{(n)}    -\theta  L^{(n)}}{\sqrt{\theta   L^{(n)}}}$
converges in distribution  to a standard Gaussian  r.v.  (with mean 0
and variance 1).  If the asymptotic distribution of $L^{(n)}$ is known,
one can deduce the asymptotic distribution of $K^{(n)} $.

\subsection{Known results}

\subsubsection{Kingman   coalescence}.   For   Kingman   coalescence,  a
coalescence corresponds to the apparition  of a common ancestor of only
two  individuals.  In  particular,   we  have  for  $0\leq  k\leq  n-1$,
$Y_k^{(n)}=n-k$.  Thus we  get $\tau_n=n-1$  as well  as $g_{Y^{(n)}_k}=
(n-k)(n-k-1)/2$.    We  also   have   $\displaystyle  \frac{L^{(n)}}{2}=
\sum_{k=0}^{n-2}  \inv{n-k-1} E_k= \sum_{k=1}^{n-1}  \inv{k} E_{n-k-1}$.
The  r.v.   $L^{(n)}/2$  is  distributed  as  the   sum  of  independent
exponential r.v.  with parameter $1$ to $n-1$, that is as the maximum on
$n-1$  independent   exponential  r.v.    with  mean  $1$,   see  Feller
\cite{f:ipta}   section   I.6.    An   easy   computation   gives   that
$L^{(n)}/(2\log(n))$   converges  in   probability  to   $1$   and  that
$\displaystyle \frac{L^{(n)}}{2}  - \log(n) $  converges in distribution
to  the  Gumbel distribution  (with  density  $\displaystyle \expp{-x  -
  \exp{-x}}$) when $n$ goes to infinity.  It is then easy to deduce that
$\displaystyle     \frac{K^{(n)}    -\theta    \E[L^{(n)}]}{\sqrt{\theta
    \E[L^{(n)}]}}$ converges in distribution to the standard Gaussian
distribution.  This provides 
the  weak convergence  and  the asymptotic  normality  of the  Watterson
\cite{w:nssgmwr}  estimator  of  $\theta$: $\displaystyle  \frac{K^{(n)}
}{\E[L^{(n)}]}= \frac{K^{(n)} }{\sum_{k=1}^{n-1} \inv{k}}$. See also the
appendix in \cite{dimr:rrc}.

\subsubsection{Bolthausen-Sznitman coalescence}

In Drmota and al. \cite{dimr:rrc}, the authors consider the Bolthausen-Sznitman
coalescence: $\Lambda$ is the Lebesgue measure on $[0,1]$. In this case
they prove that $\displaystyle \inv{n} \log(n) L^{(n)}$ converges in
probability to $1$ and that  $\displaystyle  \frac{L^{(n)} - a_n}{b_n}$
converges in distribution to a stable r.v. $Z$ with Laplace transform 
$\displaystyle  \E[\expp{-\lambda Z}]=\expp{\lambda \log(\lambda)} $ for
$\lambda>0$, where 
\[
a_n=\frac{n}{\log(n)} + \frac{n \log(\log(n)) }{\log(n)^2}
\quad\text{and}\quad b_n=\frac{n}{\log(n)^2}.
\]
It is then easy to deduce that $\displaystyle \frac{ K^{(n)} - \theta
a_n}{\theta b_n}$ converges to $Z$. 

\subsubsection{The case $\int_{(0,1]} x^{-1}
\Lambda(dx) <\infty $}

In M\"ohle \cite{m:nssplfs}, the author investigates the case where $x^{-1}
\Lambda(dx)$ is a finite measure and consider directly the asymptotic
distribution of $K^{(n)}$. In particular he gets that
$K^{(n)}/n\theta$ converges in distribution to a non-negative r.v. $Z$
uniquely determined by its moments: for $k\geq 1$, 
\[
\E[Z^k]=\frac{k!}{\prod_{i=1}^k \Phi(i)},\quad\text{with} \quad
\Phi(i)=\int_{[0,1]} (1- (1-x)^i) x^{-2} \Lambda(dx). 
\]
There is an equation in law for $Z$ when $\Lambda$ is a simple
measure, that is when $\int_{(0,1]} x^{-2} \Lambda(dx)<\infty $. 

\subsubsection{Beta coalescent}
The Beta-coalescent correspond to the case where $\Lambda$ is the
Beta$(2-\alpha,\alpha)$ distribution, with $\alpha\in (1,2)$:
$\displaystyle
\Lambda(dx)=\inv{\Gamma(2-\alpha) \Gamma(\alpha)}
  x^{1-\alpha}(1-x)^{\alpha-1} dx$. The Kingman coalescent can be viewed
  as the asymptotic case $\alpha=2$ and the Bolthausen-Sznitman
  coalescence as the asymptotic case $\alpha=1$. 

The first order asymptotic behavior of $L^{(n)}$ is given in
\cite{bbs:stpbc}, theorem 1.9:  $n^{\alpha-2} L^{(n)}$ converges in
probability to $\displaystyle \frac{\Gamma(\alpha)
  \alpha(\alpha-1)}{2-\alpha}$. We shall now investigate the asymptotic
distribution of $L^{(n)}$. 

\subsection{Main result}
In this paper we shall state a partial result concerning the asymptotic
distribution of $L^{(n)}$. 
We shall only give the asymptotic distribution of the total length of the
coalescent tree up to the $\lfloor nt \rfloor$-th coalescence:
\begin{equation}
   \label{eq:Lnt}
L^{(n)}_{t}=\sum_{k=0}^{\lfloor nt \rfloor \wedge (\tau_n-1)}
\frac{Y^{(n)}_{k} }{g_{Y^{(n)}_{k} }} 
  E_k,
\end{equation}
where $\lfloor x \rfloor$ is the largest integer smaller or equal to $x$
for $x\geq 0$. 

We  say
$g=O(f)$, where  $f$ is  a non-negative function  and $g$ a  real valued
function  defined on a  set $E$  (mainly here  $E=[0,1]$ or  $E=\N^*$ or
$E=\N^*\times [0,1]$), if there exists a finite constant $C>0$ such that
$|g(x)|\leq Cf(x)$ for all $x\in E$.

Let   $\nu(dx)=x^{-2}\Lambda(dx)$  and   $\rho(t)=\nu((t,1])$.   
We  assume  that   $\rho(t)=C_0t^{-\alpha}  +O(t^{-\alpha+\zeta})$  for  some
$\alpha\in (1,2)$, $C_0>0$  and  $\zeta>1-1/\alpha$.  This includes the
Beta$(2-\alpha,\alpha)$ distribution for $\Lambda$. We  have, see Lemma
\ref{lem:dlg},  that 
\[
g_n=C_0 \Gamma(2-\alpha) n^\alpha +  O(n^{\alpha -\min(\zeta,1)}).
\]

Let $\gamma=\alpha-1$. Let $V=(V_t,t\geq  0)$ be a $\alpha$-stable L\'evy
process  with no  positive jumps  (see chap.   VII in  \cite{b:pl}) with
Laplace   exponent  $\psi(u)=u^\alpha/\gamma$:   for   all  $u\geq   0$,
$\E[\expp{-uV_t}]=\expp{t u^\alpha/\gamma}$.

We  first  give in
Proposition \ref{prop:tau_n}   the
asymptotic for the number of  coalescences, $\tau_n$:
\[
n^{-\inv{\alpha}}\left(n-\frac{\tau_n
  }{\gamma}\right)
\; \xrightarrow[n\rightarrow \infty ]{\text{(d)}} \; 
V_\gamma.
\]
See  also Gnedin  and  Yakubovich \cite{gy:ncl}  and  Iksanov and  M\"ohle
\cite{im:rrratdmc} for  different proofs of this  results under slightly
different or stronger hypothesis.  Then  we give the asymptotics of $\hat
L^{(n)}_t $  defined as $  C_0\Gamma(2-\alpha) L^{(n)}_t$ but  for the
exponential r.v.  $E_k$  which are replaced by their mean  that is 1 and
for   $g_{Y^{(n)}_{k}   }$  which   is   replaced   by  its   equivalent
$C_0\Gamma(2-\alpha) \left(Y^{(n)}_{k}\right)^{2-\alpha}$:
\begin{equation}
   \label{eq:defLhat}
\hat L^{(n)}_{t}= \sum_{k=0}^{\lfloor nt \rfloor \wedge (\tau_n-1)}
\left(Y^{(n)}_{k}\right)^{1-\alpha}.
\end{equation}
For $t\in [0,\gamma]$, we set 
\[
v(t) =\int_0^t
    \left(1-\frac{r}{\gamma}\right)^{-\gamma} dr.
\] 
Theorem  \ref{theo:main} gives that the following
convergence in distribution holds for all $t\in (0,\gamma)$
\begin{equation}
   \label{eq:CVLNDV-intro}
n^{-1+\alpha -1/\alpha} (  \hat L^{(n)}_t-n^{2-\alpha}
v(t))  \; \xrightarrow[n\rightarrow \infty ]{\text{(d)}}
\; (\alpha-1) \int_0^t dr\; (1-\frac{r}{\gamma})^{-\alpha} V_r.
\end{equation}
Then we deduce our main result, Theorem \ref{th:ConvLn}.
Let $\displaystyle \alpha\in (1, \frac{1+\sqrt{5}}{2})$.    
Then for all $t\in (0,\gamma)$, we have   the following convergence in
 distribution 
\begin{equation}
   \label{eq:CVlNDV_intro}
n^{-1+\alpha -1/\alpha} \left(  L^{(n)}_t-n^{2-\alpha}
\frac{v(t)}{C_0 \Gamma(2-\alpha)} \right)  \; \xrightarrow[n\rightarrow \infty
]{\text{(d)}} 
\;  \frac{\alpha-1 }{C_0 \Gamma(2-\alpha)} \int_0^t dr\;
(1-\frac{r}{\gamma})^{-\alpha} V_r.  
\end{equation}
We also  have that $n^{\alpha-2} L^{(n)}_t$ converges  in probability to
$\displaystyle \frac{v(t)}{C_0 \Gamma(2-\alpha)}$ for $\alpha\in (1,2)$.
For $t=\gamma$, intuitively we  have $L^{(n)}_\gamma$ close to $L^{(n)}$
as  $\tau_n$ is  close to  $n/\gamma$. In  particular, one  expects that
$n^{\alpha-2}  L^{(n)}$  converges   in  probability  to  $\displaystyle
\frac{v(\gamma)}{C_0   \Gamma(2-\alpha)}$.   For   the  Beta-coalescent,
$\displaystyle     \Lambda(dx)=\inv{\Gamma(2-\alpha)     \Gamma(\alpha)}
x^{1-\alpha}(1-x)^{\alpha-1} dx$, we have $C_0=1/\alpha \Gamma(2-\alpha)
\Gamma(\alpha)$ and  indeed, theorem 1.9 in  \cite{bbs:stpbc} gives that
$n^{\alpha-2}  L^{(n)}$  converges   in  probability  to  $\displaystyle
\frac{\Gamma(\alpha)  \alpha(\alpha-1)}{2-\alpha}=  \frac{v(\gamma)}{C_0
  \Gamma(2-\alpha)}$.  Notice theorem  1.9 in \cite{bbs:stpbc} is stated
for more general coalescents than the Beta-coalescent.

In Corollary \ref{cor:main}, we  give the asymptotic distribution of the
number  $K^{(n)}_t$  of mutations  on  the  coalescent  tree up  to  the
$\lfloor  nt   \rfloor$-th  coalescent  for  $\alpha   \in  (1,2)$.   In
particular,   for  $\displaystyle  \alpha>   \frac{1+\sqrt{5}}{2}$,  the
approximations of the exponential r.v.  by their mean are more important
than the fluctuations of $\hat L^{(n)}$, and the asymptotic distribution
is gaussian.

\subsection{Organization of  the paper}  

In Section \ref{sec:jump}  we give
estimates (distribution, Laplace transform) for the number of collisions
in the first coalescence in a population of $n$ individuals.
We prove the asymptotic distribution of the number of collisions,
$\tau_n$, in Section \ref{sec:taun}, as well as an invariance principle
for the coalescent process $Y^{(n)}$, see Corollary \ref{cor:cvVn}. 
In Section \ref{sec:prelim}, we give error bounds on the approximation
of $L^{(n)}_t$ by $\hat L^{(n)}_t/ C_0 \Gamma(2-\alpha)$. 
Section \ref{sec:hatLn} is devoted to the asymptotic distribution of $\hat
L^{(n)}_t$. 
Eventually,  our main result, Theorem \ref{th:ConvLn}, on the asymptotic
distribution of $L^{(n)}_t$,  and Corollary \ref{cor:main}, on the
asymptotic distribution of the number of mutations $K^{(n)}_t$,  
 and their  proofs are
given in Section \ref{sec:main}.

In what  follows, $c$ is a  non important constant which  value may vary
from line to line. 

\section{Law of the first jump}\label{sec:jump}

Let  $Y$ be  a  discrete time  Markov  chain on  $\N^*$ with  transition
kernel $P$   given   by  \reff{eq:P}   and  started  at   $Y_0=n$.   Let
$X^{(n)}_k=Y_{k-1} -Y_k$  for $k\geq 1$.  We give some estimates  on the
moment of $X^{(n)}_1$ and its Laplace transform.

For $n\geq 1$, $x\in (0,1)$, let $B_{n,x}$ be a binomial r.v. with
parameter $(n,x)$. Recall that for $1\leq k\leq n$, we have
\begin{equation}
   \label{eq:pBnx}
\P(B_{n,x}\geq  k)=\frac{n!}{(k-1)!(n-k)!} \int_0^x t^{k-1} (1-t)^{n-k} \; dt.
\end{equation}
Recall  that $\nu(dx)=x^{-2}  \Lambda(dx)$ and
$\rho(t)=\nu((t,1])$. Use the first equality in \reff{eq:gk} and
\reff{eq:pBnx} to get 
\begin{align}
g_n
\nonumber
&=\int_0^1 \sum_{k=2}^n \binom{n}{k} x^k (1-x)^{n-k} \nu(dx)\\
\nonumber
&=\int_0^1 \P(B_{n,x} \geq 2) \nu(dx)\\
   \label{eq:gn}
&=n(n-1) \int_0^1 (1-t)^{n-2} t
\rho(t) \; dt. 
\end{align}

Notice also that $\displaystyle \P(X^{(n)}_1=k)=P(n,n-k)=\inv{g_n} \int_0^1
\P(B_{n,x}=k+1) \nu(dx)$ and thus
\begin{equation}
   \label{eq:bfxn}
\P(X^{(n)}_1\geq k)=\frac{\int_0^1 \P(B_{n,x} \geq k+1)
  \nu(dx)}{g_n}=\frac{(n-2)!}{k!(n-k-1)!} \frac{\int_0^1
(1-t)^{n-k-1} t^k \rho(t) \; dt} {\int_0^1
(1-t)^{n-2} t \rho(t) \; dt}  .
\end{equation}

Let $\alpha\in (1,2)$ and $\gamma=\alpha-1$. 

We  say
$g=o(f)$, where  $f$ is  a non-negative function  and $g$ a  real valued
function  defined on  $(0,1]$, if for any $\varepsilon>0$, there exists
$x_0>0$ s.t. 
$|g(x)|\leq \varepsilon f(x)$ for all $x\in (0,x_0]$.

\begin{lem}\label{lem:cvloiXn}
   Assume  that $\rho(t)=C_0 t^{-\alpha}+o(t^{-\alpha})$. Then
   $(X_1^{(n)}, n\geq 2)$ converges in distribution  to the r.v. $X$
  such that  for all $k\geq 1$, 
\[
\P(X\geq k)=\inv{ \Gamma(2-\alpha) }
\frac{\Gamma(k+1-\alpha) }{k!}.
\]
We have $\E[X]=1/\gamma$, $\E[X^2]=+\infty $  and its Laplace transform
$\phi$ is given by:  for $u\geq 0$, 
\[
\phi(u)=\E[\expp{-uX}]=1+\frac{\expp{u} -1}{\alpha-1}
\left[(1-\expp{-u})^{\alpha-1} -1\right]. 
\]
\end{lem}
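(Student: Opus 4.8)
The plan is to start from the exact expression \reff{eq:bfxn} for $\P(X^{(n)}_1\geq k)$ and, for each fixed $k\geq 1$, to determine the large-$n$ behaviour of the integral $J_n(k)=\int_0^1 (1-t)^{n-k-1} t^k \rho(t)\, dt$. The heuristic is that the weight $(1-t)^{n-k-1}t^k$ concentrates near $t=0$ as $n\to\infty$, where the hypothesis gives $\rho(t)\sim C_0 t^{-\alpha}$, so that $J_n(k)$ should be comparable to $C_0\beta_n(k)$ with $\beta_n(k):=\int_0^1 (1-t)^{n-k-1} t^{k-\alpha}\, dt=\Gamma(k+1-\alpha)\Gamma(n-k)/\Gamma(n+1-\alpha)$. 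I would write $\rho(t)=C_0 t^{-\alpha}+r(t)$ with $r(t)=o(t^{-\alpha})$ and split $J_n(k)$ accordingly. Given $\varepsilon>0$, one picks $x_0$ with $|r(t)|\leq \varepsilon t^{-\alpha}$ on $(0,x_0]$, so the contribution of $r$ over $(0,x_0]$ is at most $\varepsilon\,\beta_n(k)$, while on $[x_0,1]$ the factor $(1-t)^{n-k-1}$ yields an $O((1-x_0)^{n})$ bound, using that $\rho$ is bounded on $[x_0,1]$ since $\Lambda$ is finite. As Stirling gives $\beta_n(k)\sim \Gamma(k+1-\alpha)\,n^{\alpha-1-k}$, the exponentially small tail is negligible on this scale, and I conclude $J_n(k)=C_0\,\beta_n(k)\,(1+o(1))$.

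Next I would form the ratio $J_n(k)/J_n(1)$ appearing in \reff{eq:bfxn} and multiply by the prefactor $(n-2)!/(k!\,(n-k-1)!)$. Writing $(n-2)!=\Gamma(n-1)$, $(n-k-1)!=\Gamma(n-k)$ and using the explicit $\beta_n(k)$, all the $n$-dependent $\Gamma$ factors cancel and leave
\[
\P(X^{(n)}_1\geq k)\;\xrightarrow[n\to\infty]{}\;\inv{\Gamma(2-\alpha)}\,\frac{\Gamma(k+1-\alpha)}{k!}=:p_k .
\]
Since $\Gamma(k+1-\alpha)/k!\sim k^{-\alpha}\to0$, the $p_k$ are the tail probabilities of an honest $\N^*$-valued law (note $p_1=1$), so convergence of all tails gives the claimed convergence in distribution of $X^{(n)}_1$ to $X$.

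It remains to identify the moments and Laplace transform of $X$, which I would do by generating functions, the basic input being the binomial series $\sum_{k\geq0}\frac{\Gamma(k+1-\alpha)}{k!}s^k=\Gamma(1-\alpha)(1-s)^{\alpha-1}$ for $|s|<1$. Summing tails and using Abel's theorem at $s=1$ (the series converges since $\alpha>1$) gives $\E[X]=\sum_{k\geq1}p_k=-\Gamma(1-\alpha)/\Gamma(2-\alpha)=1/\gamma$ after $\Gamma(2-\alpha)=(1-\alpha)\Gamma(1-\alpha)$; for the second moment $\E[X^2]=\sum_{k\geq1}(2k-1)p_k$ diverges because $p_k\sim k^{-\alpha}/\Gamma(2-\alpha)$ and $\alpha<2$. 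Finally, for $G(s)=\E[s^X]$ I would use $\sum_{k\geq1}\P(X\geq k)s^k=\frac{s}{1-s}(1-G(s))$, evaluate the left side from the binomial series as $\frac{1}{\alpha-1}\big[1-(1-s)^{\alpha-1}\big]$, solve for $G(s)$, and set $s=\expp{-u}$ with $(1-s)/s=\expp{u}-1$ to recover the stated $\phi$.

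The main obstacle is the rigorous justification of the integral asymptotics of the first step: controlling $r(t)=o(t^{-\alpha})$ finely enough to get $J_n(k)=C_0\beta_n(k)(1+o(1))$. The delicate point is that the comparison is against the only polynomially small $\beta_n(k)$, so one must verify both that the away-from-zero part, although merely exponentially small, is genuinely negligible on this scale, and that the near-zero error is dominated by the same $\beta_n(k)$. Everything following this reduction is an exact $\Gamma$-function and generating-function computation.
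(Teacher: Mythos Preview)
Your argument is correct. For the convergence in distribution, your approach coincides with the paper's: both start from \reff{eq:bfxn}, pass to the limit in $J_n(k)$ using $\rho(t)\sim C_0 t^{-\alpha}$ near $0$, and let the $\Gamma$ factors cancel. The paper simply asserts the asymptotic $J_n(k)=C_0\beta_n(k)(1+o(1))$ without the $\varepsilon$/$x_0$ splitting you spell out; your worry about the ``main obstacle'' is unwarranted, since the contribution on $[x_0,1]$ is indeed $O((1-x_0)^{n})$ against a polynomially decaying $\beta_n(k)\asymp n^{\alpha-1-k}$.

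Where you genuinely diverge from the paper is in the identification of $\E[X]$, $\E[X^2]$ and $\phi$. The paper rewrites $\P(X\geq k)$ and $\P(X=k)$ as Beta integrals via \reff{eq:gamma-rel}, obtains $\E[X]$ by summing the geometric series under the integral to produce another Beta integral, and derives $\phi$ by inserting $\Gamma(k+1-\alpha)=\int_0^\infty x^{k-\alpha}\expp{-x}\,dx$, summing inside the integral, and performing two integrations by parts. Your route through the binomial series $\sum_{k\geq0}\frac{\Gamma(k+1-\alpha)}{k!}s^k=\Gamma(1-\alpha)(1-s)^{\alpha-1}$ and the tail identity $\sum_{k\geq1}\P(X\geq k)s^k=\frac{s}{1-s}(1-G(s))$ treats all three quantities uniformly and is more direct for $\phi$, bypassing the double integration by parts; the paper's integral representation is arguably cleaner for $\E[X]$ taken in isolation. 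Both arguments yield the same closed form after the substitution $s=\expp{-u}$.
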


We shall use repeatedly the identity of the  beta distribution: for
$a>0$ and $b>0$, we have 
\begin{equation}
   \label{eq:gamma-rel}
\int_0^1 t^{a-1}(1-t)^{b-1}
dt=\frac{\Gamma(a)\Gamma(b)}{\Gamma(a+b)}.
\end{equation}

\begin{proof}
The condition    $\rho(t)=C_0 t^{-\alpha}+o(t^{-\alpha})$ implies that
for fixed $k\geq 1$, as $n$ goes to infinity, we have 
\[
   \int_0^1
(1-t)^{n-k-1} t^k \rho(t) \; dt 
=  \frac{\Gamma(k+1-\alpha)\Gamma(n-k)}{\Gamma(n+1-\alpha)} \left(C_0+
o(1) \right).
\]
Therefore, we get that 
\begin{align*}
\lim_{n\rightarrow\infty }\P(X_1^{(n)} \geq k)
&=\lim_{n\rightarrow\infty } \frac{(n-2)!}{k!(n-k-1)!} \frac{\int_0^1
(1-t)^{n-k-1} t^k \rho(t) \; dt} {\int_0^1
(1-t)^{n-2} t \rho(t) \; dt} \\
&=   \lim_{n\rightarrow\infty } \frac{(n-2)!}{k!(n-k-1)!}
\frac{\Gamma(k+1-\alpha)\Gamma(n-k)}{\Gamma(n+1-\alpha)} 
\frac{\Gamma(n+1-\alpha)}{\Gamma(2-\alpha)\Gamma(n-1)}\\
&=\inv{ \Gamma(2-\alpha) }
\frac{\Gamma(k+1-\alpha) }{k!}.
\end{align*}
This ends the first part of the Lemma. Notice that 
\[
\P(X\geq k)=\inv {\Gamma(\alpha)\Gamma(2-\alpha)} \int_0^1 t^{k-\alpha}
(1-t)^{\alpha-1} dt
\]
and as $\P(X=k)
=\P(X\geq k) - \P(X\geq k+1)$, we get 
\begin{equation}
   \label{eq:PX=k}
\P(X=k)
=\inv {\Gamma(\alpha)\Gamma(2-\alpha)} \int_0^1 t^{k-\alpha}
(1-t)^{\alpha} dt
=\frac{\alpha}{ \Gamma(2-\alpha) }
\frac{\Gamma(k+1-\alpha) }{(k+1)!}. 
\end{equation}

We have 
\begin{align*}
   \E[X]
=\sum_{k\geq 1} \P(X\geq k)
&=\inv {\Gamma(\alpha)\Gamma(2-\alpha)}
\int_0^1  \sum_{k\geq 1}t^{k-\alpha}
(1-t)^{\alpha-1} dt\\
&=\inv {\Gamma(\alpha)\Gamma(2-\alpha)}
\int_0^1  t^{1-\alpha}
(1-t)^{\alpha-2} dt\\
&=\inv
{\Gamma(\alpha)\Gamma(2-\alpha)}\frac{\Gamma(2-\alpha)\Gamma(\alpha-1)}
{\Gamma(1)}\\
&=\inv{\alpha-1}. 
\end{align*}
The asymptotic expansion 
\begin{equation}
\label{eq:expansionGamma}
\Gamma(z)
=\sqrt{2\pi}z^{z-1/2}\expp{-z}\left(1+\inv{12z}+o\left(\inv{z}\right)\right)
\end{equation}
implies  $\displaystyle \P(X=k) \sim_{+\infty } \frac{{\alpha}}{{\Gamma}(2-{\alpha})}k^{-\alpha-1}$. Therefore we have 
$\E[X^2]=+\infty$. We compute the Laplace transform of $X$. Let $u\geq
0$, we have 
\begin{align*}
\phi(u)
=\E[\expp{-u X}]
&=\frac{\alpha}{ \Gamma(2-\alpha) } \sum_{k\geq 1}  \inv{(k+1)!} \expp{-ku}
\int_0^\infty  x^{k-\alpha}\expp{-x}\; dx\\
&=\frac{\alpha\expp{u} }{ \Gamma(2-\alpha) } 
\int_0^\infty  \sum_{k\geq 2}  \inv{k!} \expp{-ku} x^{k-1
  -\alpha}\expp{-x}\; dx\\  
&=\frac{\alpha\expp{u}}{\Gamma(2-\alpha)}\int_0^\infty x^{-1-\alpha}\expp{-x}(\expp{x\expp{-u}}-x\expp{-u}-1)\; dx\\
&=1+\frac{\expp{u} -1}{\alpha-1}
\left[(1-\expp{-u})^{\alpha-1} -1\right],
\end{align*}
where we used \reff{eq:PX=k} with $\displaystyle \Gamma(k+1-\alpha)=\int_0^\infty
x^{k-\alpha} \expp{-x} dx$  for the first equality and  two integrations
by parts for the last.
\end{proof}

We give bounds  on $g_n$. 
\begin{lem}
\label{lem:dlg}
   Assume  that $\rho(t)=C_0t^{-\alpha} +O(t^{-\alpha+\zeta})$  for some
  $C_0>0$ and $\zeta>0$. Then we have,  for $n\geq 2$, 
\begin{equation}\label{eq:dlg}
 g_n = C_0 \Gamma(2-\alpha) n^\alpha +  O(n^{\alpha-\min(\zeta,1)}).
\end{equation}
\end{lem}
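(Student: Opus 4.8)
The plan is to substitute the assumed expansion $\rho(t)=C_0t^{-\alpha}+R(t)$, where $|R(t)|\le c\,t^{-\alpha+\zeta}$ on $(0,1]$, into the exact formula \reff{eq:gn} for $g_n$, and then to treat the two resulting pieces separately. Each piece is a moment of $(1-t)^{n-2}$ against a power of $t$, so each is computed or bounded using the beta identity \reff{eq:gamma-rel}; the final asymptotics then come from the Stirling expansion \reff{eq:expansionGamma} applied to ratios of Gamma functions.

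For the leading term I would compute, using \reff{eq:gamma-rel} (valid since $2-\alpha>0$) together with the identity $n(n-1)\Gamma(n-1)=\Gamma(n+1)$,
\[
C_0\,n(n-1)\int_0^1 (1-t)^{n-2}\,t^{1-\alpha}\,dt
=C_0\,n(n-1)\,\frac{\Gamma(2-\alpha)\,\Gamma(n-1)}{\Gamma(n+1-\alpha)}
=C_0\,\Gamma(2-\alpha)\,\frac{\Gamma(n+1)}{\Gamma(n+1-\alpha)}.
\]
The expansion \reff{eq:expansionGamma} gives $\Gamma(n+1)/\Gamma(n+1-\alpha)=n^{\alpha}\bigl(1+O(1/n)\bigr)=n^\alpha+O(n^{\alpha-1})$, so this piece equals $C_0\Gamma(2-\alpha)\,n^\alpha+O(n^{\alpha-1})$.

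For the remainder I would bound, in the same way,
\[
\Bigl| n(n-1)\int_0^1(1-t)^{n-2}\,t\,R(t)\,dt\Bigr|
\le c\,n(n-1)\int_0^1(1-t)^{n-2}\,t^{1-\alpha+\zeta}\,dt
= c\,\Gamma(2-\alpha+\zeta)\,\frac{\Gamma(n+1)}{\Gamma(n+1-\alpha+\zeta)},
\]
where the integral is finite because $2-\alpha+\zeta>0$. Again by \reff{eq:expansionGamma} the Gamma ratio is $n^{\alpha-\zeta}\bigl(1+O(1/n)\bigr)=O(n^{\alpha-\zeta})$, so the remainder contributes $O(n^{\alpha-\zeta})$.

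Adding the two estimates yields $g_n=C_0\Gamma(2-\alpha)\,n^\alpha+O(n^{\alpha-1})+O(n^{\alpha-\zeta})=C_0\Gamma(2-\alpha)\,n^\alpha+O(n^{\alpha-\min(\zeta,1)})$, which is \reff{eq:dlg}. The calculation is essentially routine; the only points needing care are verifying that the power of $t$ in each integrand stays integrable at $0$ (both $2-\alpha$ and $2-\alpha+\zeta$ are positive for $\alpha\in(1,2)$, $\zeta>0$) and extracting the sharp exponent and error order from the Gamma ratios, so that the $O(n^{\alpha-1})$ coming from the Stirling correction of the main term and the $O(n^{\alpha-\zeta})$ coming from the remainder combine into the announced $O(n^{\alpha-\min(\zeta,1)})$.
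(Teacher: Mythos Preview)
Your proof is correct and follows essentially the same route as the paper: substitute the expansion of $\rho$ into \reff{eq:gn}, evaluate the two pieces via the beta identity \reff{eq:gamma-rel}, and extract the asymptotics of the resulting Gamma ratios using \reff{eq:expansionGamma}. The only cosmetic difference is that you collapse $n(n-1)\Gamma(n-1)$ into $\Gamma(n+1)$ before applying Stirling, whereas the paper keeps the factor $n(n-1)$ in front and expands $\Gamma(n-1)/\Gamma(n+1-\alpha)$ directly.
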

\begin{proof}
Notice that 
\[
g_n=n(n-1) \int_0^1 (1-t)^{n-2} t
\left(C_0 t^{-\alpha} + O(t^{-\alpha+\zeta})\right) \; dt
=C_0 n(n-1) \frac{\Gamma(2-\alpha)\Gamma(n-1)}{\Gamma(n+1-\alpha)} + 
h_n,
\]
where $\displaystyle h_n=n(n-1) \int_0^1 (1-t)^{n-2} t^{-\alpha+\zeta+1}
O(1) \; 
dt$. In particular, using \reff{eq:expansionGamma}, we have for $n\geq 2$
\[
|h_n|\leq c n(n-1) \int_0^1 (1-t)^{n-2} t^{-\alpha+\zeta+1} =c n(n-1)
\frac{\Gamma(2-\alpha+\zeta)\Gamma(n-1)}{\Gamma(n+1-\alpha+\zeta)}  \leq c
n^{\alpha-\zeta}. 
\]
Using      \reff{eq:expansionGamma}     again,      we      get     that
$\Gamma(n-1)/\Gamma(n+1-\alpha)= n^{\alpha-2}  + O(n^{\alpha-3})$.  This
implies that
\[
g_n= C_0 \Gamma(2-\alpha) n^{\alpha} + O(n^{\max(\alpha-1, \alpha-\zeta)}). 
\]
\end{proof}

We give an expansion of the first moment of $X_1^{(n)}$. 
\begin{lem}\label{lem:M1}
  Assume  that $\rho(t)=C_0t^{-\alpha}  +O(t^{-\alpha+\zeta})$  for some
  $C_0>0$ and $\zeta>0$.  Let  $\varepsilon_0>0$. We set 
\begin{equation}
   \label{eq:fn}
\varphi_n=\begin{cases}
{n^{-\zeta} } &\quad \text{if}\quad \zeta<\alpha-1,\\
 {n^{1-\alpha+\varepsilon_0}} &\quad \text{if}\quad \zeta=\alpha-1,\\
 n^{1-\alpha} &\quad \text{if}\quad \zeta>\alpha-1.
\end{cases}
\end{equation}
There exists a
  constant $C_{\ref{eq:M1}}$ s.t. for all $n\geq 2$, we have
\begin{equation}
   \label{eq:M1}
\val{\E[X^{(n)}_1] - \inv{\gamma} } \leq
C_{\ref{eq:M1}}\varphi_n.
\end{equation}
\end{lem}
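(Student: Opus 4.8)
The plan is to derive an exact integral formula for $\E[X^{(n)}_1]$, extract its leading term via the Beta integral \reff{eq:gamma-rel} and the Gamma expansion \reff{eq:expansionGamma}, and then control the remainder using both the hypothesis $\rho(t)=C_0t^{-\alpha}+O(t^{-\alpha+\zeta})$ and the estimate for $g_n$ from Lemma \ref{lem:dlg}.

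First I would compute the mean exactly. Writing $\E[X^{(n)}_1]=\sum_{k\geq 1}\P(X^{(n)}_1\geq k)$ and inserting \reff{eq:bfxn}, the inner sum collapses via $\sum_{j\geq 2}\P(B_{n,x}\geq j)=\E[B_{n,x}]-\P(B_{n,x}\geq 1)=nx-1+(1-x)^n$, so that
\[
\E[X^{(n)}_1]=\inv{g_n}\int_0^1\bigl(nx-1+(1-x)^n\bigr)\,\nu(dx).
\]
An integration by parts in $x$ against $\rho(x)=\nu((x,1])$ (the boundary terms vanish since $\rho(1)=0$ and $nx-1+(1-x)^n=O(x^2)$ while $\rho(x)=O(x^{-\alpha})$ with $\alpha<2$ near $0$) rewrites this as
\[
\E[X^{(n)}_1]=\frac{n}{g_n}\int_0^1\bigl(1-(1-x)^{n-1}\bigr)\rho(x)\,dx.
\]

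Next I substitute $\rho(x)=C_0x^{-\alpha}+O(x^{-\alpha+\zeta})$. The main integral $I_n=\int_0^1(1-(1-x)^{n-1})x^{-\alpha}\,dx$ is evaluated by one further integration by parts and \reff{eq:gamma-rel}, giving $I_n=\inv{1-\alpha}\bigl(1-\Gamma(2-\alpha)\Gamma(n)/\Gamma(n+1-\alpha)\bigr)$, whence \reff{eq:expansionGamma} yields $C_0I_n=\frac{C_0\Gamma(2-\alpha)}{\alpha-1}n^{\alpha-1}+\frac{C_0}{1-\alpha}+O(n^{\alpha-2})$. Combining this with $g_n=C_0\Gamma(2-\alpha)n^{\alpha}+O(n^{\alpha-\min(\zeta,1)})$ from Lemma \ref{lem:dlg}, the product of $n/g_n$ with the leading term of $C_0I_n$ is exactly $\inv{\gamma}$ (consistent with $\E[X]=1/\gamma$ in Lemma \ref{lem:cvloiXn}), with a relative error $O(n^{-\min(\zeta,1)})$, while the subleading pieces contribute only $O(n^{1-\alpha})+O(n^{-1})$.

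The main obstacle, and the source of the three cases in \reff{eq:fn}, is the remainder integral $J_n=\int_0^1(1-(1-x)^{n-1})x^{-\alpha+\zeta}\,dx$ carrying the error of the $\rho$-expansion, which enters $\E[X^{(n)}_1]$ through the term $n^{1-\alpha}J_n$. The same integration-by-parts and Beta-function computation, now with exponent $\beta=-\alpha+\zeta$, shows $J_n=O(n^{\alpha-\zeta-1})$ when $\beta<-1$ (i.e. $\zeta<\alpha-1$), $J_n=\sum_{j=1}^{n-1}\inv{j}=O(\log n)$ when $\beta=-1$ (i.e. $\zeta=\alpha-1$), and $J_n=O(1)$ when $\beta>-1$. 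Hence $n^{1-\alpha}J_n$ is $O(n^{-\zeta})$, $O(n^{1-\alpha}\log n)$, and $O(n^{1-\alpha})$ respectively. Finally I would compare exponents, using $\gamma=\alpha-1\in(0,1)$, to verify that in each regime this term dominates (or matches) the other errors $n^{-\min(\zeta,1)}$, $n^{1-\alpha}$, $n^{-1}$; in the boundary case $\zeta=\alpha-1$ the logarithm is absorbed by the arbitrarily small loss $n^{\varepsilon_0}$, which produces precisely the $\varphi_n$ of \reff{eq:fn}.
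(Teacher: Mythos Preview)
Your argument is correct and reaches the same bound $\varphi_n$ through the same basic ingredients (the tail-sum formula \reff{eq:bfxn}, the Beta integral \reff{eq:gamma-rel}, and the Gamma expansion \reff{eq:expansionGamma}), but the organization differs from the paper's in one notable way. After reaching the identity $\E[X^{(n)}_1]=\frac{n}{g_n}\int_0^1(1-(1-t)^{n-1})\rho(t)\,dt$, the paper performs one further integration by parts to rewrite this as the ratio
\[
\E[X^{(n)}_1]=\frac{\int_0^1(1-t)^{n-2}\Bigl(\int_t^1\rho(r)\,dr\Bigr)\,dt}{\int_0^1(1-t)^{n-2}\,t\rho(t)\,dt},
\]
and then expands the inner antiderivative as $\int_t^1\rho(r)\,dr=\inv{\gamma}t\rho(t)+O(t^{\min(-\alpha+\zeta+1,0)})+O(t^{-\varepsilon_0})\ind_{\{\zeta=\alpha-1\}}$. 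This makes the leading term $\inv{\gamma}$ drop out \emph{exactly} from the ratio, so the expansion of $g_n$ from Lemma \ref{lem:dlg} is never invoked in the error analysis. Your route instead expands $\rho$ directly, evaluates the main integral $I_n$ explicitly, and must then feed in the $O(n^{\alpha-\min(\zeta,1)})$ error of $g_n$ to recover $\inv{\gamma}$; you correctly check afterwards that this extra error $O(n^{-\min(\zeta,1)})$ is dominated by $\varphi_n$ in all three regimes. The paper's version is slightly cleaner in that it has one fewer error source to track, while yours is perhaps more transparent about where the case split on $\zeta$ versus $\alpha-1$ actually arises (the integrability threshold of $x^{-\alpha+\zeta}$ at the origin in your $J_n$).
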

\begin{proof}
   We have
\begin{align}
\E[X^{(n)}_1]
=\sum_{k\geq 1} \P(X^{(n)}_1\geq k)
&=\frac{\int_0^1 \sum_{k\geq 1}  \P(B_{n,x} \geq k+1) \nu(dx)}{g_n} \nonumber\\
&=\frac{\int_0^1
( \E[B_{n,x}] - \P(B_{n,x} \geq 1)) \nu(dx)}{g_n} 
\label{eq=E[Xn1}\\
&=\frac{\int_0^1 nx \nu(dx) - \int_0^1(1-(1-x)^n){\nu}(dx) }{g_n}\nonumber
\\
&= \frac{n\int_0^1 [1 -(1-t)^{n-1}]\rho(t) \; dt}{g_n}
\label{eq:E[Xn1]prime} \\
&=\frac{\int_0^1 (1-t)^{n-2}\left(\int_t^1 \rho(r) \; dr\right)\; dt}
{\int_0^1 (1-t)^{n-2} t \rho(t) \; dt }, \nonumber
\end{align}
using \reff{eq:bfxn} for the first equality and \reff{eq:gn} for the last.
Notice that 
\begin{align*}
    \int_t^1 \rho(r) \; dr
&= \inv{\gamma} t\rho(t)+ O(1)+ \int_t^1
 O(r^{-\alpha+\zeta})\; dr + 
  O(t^{-\alpha+\zeta+1})\\
&=\inv{\gamma} t\rho(t)+ O(t^{\min(-\alpha+\zeta+1,0)})+
  O(|\log(t)|)\ind_{\{\alpha-\zeta=1\}}\\
&=\inv{\gamma} t\rho(t)+
O(t^{\min(-\alpha+\zeta+1,0)})+O(t^{-\varepsilon_0})
\ind_{\{\alpha-\zeta=1\}}.   
\end{align*}
This implies that 
\[
\E[X^{(n)}_1]
=\inv{\gamma} +  \frac{n(n-1)}{g_n}\int_0^1
(1-t)^{n-2}\left(O(t^{\min(-\alpha+\zeta+1,0)})+
  O(t^{-\varepsilon_0})\ind_{\{\alpha-\zeta=1\}} \right)\; dt.
\]
Using \reff{eq:gamma-rel}, \reff{eq:expansionGamma} and Lemma
\ref{lem:dlg}, we get  
\begin{align*}
\left|{\mathbb E}[X^{(n)}_1]-\inv{\gamma}\right|
&\leq 
c \frac{n(n-1)}{g_n} \int_0^1(1-t)^{n-2}
\left(t^{\min(-\alpha+\zeta+1,0)}+
  t^{-\varepsilon_0}\ind_{\{\alpha-\zeta=1\}} \right)\;
\; dt \\
&\leq  c n^{2-\alpha}(n^{-1-\min(-\alpha+\zeta+1,0)}+
n^{-1+\varepsilon_0}\ind_{\{\alpha-\zeta=1\}} )\\
&\leq  c\varphi_n.
\end{align*}
\end{proof}

We give an upper bound for the second moment of $X_1^{(n)}$. 
\begin{lem}
  Assume  that $\rho(t)=O(t^{-\alpha})$.
  Then there exists a constant  $C_{\ref{eq:M2}}$ s.t. for all $n\geq 2$,
  we have
\begin{equation}
   \label{eq:M2}
\E\left[\left(X^{(n)}_1\right)^2\right] \leq  C_{\ref{eq:M2}} \frac{n^2}{g_n}.
\end{equation}
\end{lem}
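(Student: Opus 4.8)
The plan is to write the second moment of $X^{(n)}_1$ as an integral against $\nu$ of a factorial moment of a binomial variable, exploiting that the factor $x^2$ produced by the second factorial moment exactly cancels the density $x^{-2}$ of $\nu$ with respect to $\Lambda$. Concretely, I would start from the exact law recalled just before \reff{eq:bfxn}, namely $\P(X^{(n)}_1=k)=\inv{g_n}\int_0^1 \P(B_{n,x}=k+1)\,\nu(dx)$. Multiplying by $k^2$, summing over $k\geq 1$ and reindexing with $j=k+1$ gives
\[
\E\left[\left(X^{(n)}_1\right)^2\right]=\inv{g_n}\int_0^1 \E\left[(B_{n,x}-1)^2\,\ind_{\{B_{n,x}\geq 2\}}\right]\nu(dx).
\]

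The key step is an elementary pointwise bound on the integrand: for every integer $b\geq 0$ one has $(b-1)^2\,\ind_{\{b\geq 2\}}\leq b(b-1)$, since the two sides coincide for $b\in\{0,1\}$ and differ by $b-1\geq 0$ when $b\geq 2$. Taking expectations and using that the second factorial moment of the binomial is $\E[B_{n,x}(B_{n,x}-1)]=n(n-1)x^2$, I obtain $\E[(B_{n,x}-1)^2\,\ind_{\{B_{n,x}\geq 2\}}]\leq n(n-1)x^2$.

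It then remains to integrate. Since $\nu(dx)=x^{-2}\Lambda(dx)$, the factor $x^2$ removes the singularity and $\int_0^1 x^2\,\nu(dx)=\int_0^1\Lambda(dx)=\Lambda((0,1))$, which is finite because $\Lambda$ is a finite measure. Hence the numerator is at most $n(n-1)\Lambda((0,1))\leq c\,n^2$, and dividing by $g_n$ yields \reff{eq:M2} with $C_{\ref{eq:M2}}=\Lambda((0,1))$.

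There is no serious obstacle in this lemma; its entire content is the recognition that the relevant moment of the binomial is the \emph{second factorial} moment, whose $x^2$ precisely compensates the $x^{-2}$ in $\nu$. Consequently only the finiteness of $\Lambda$ is actually needed (the hypothesis $\rho(t)=O(t^{-\alpha})$ gives far more than that), and no delicate estimate near $x=0$ is required. The only points to handle with a little care are the reindexing $j=k+1$ and the pointwise inequality above; the quadratic growth $n^2$ comes entirely from the factorial moment $n(n-1)$.
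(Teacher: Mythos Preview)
Your proof is correct and lands on the same bound as the paper, $n(n-1)\int_0^1 x^2\,\nu(dx)/g_n$, via the same key observation that the second factorial moment of $B_{n,x}$ produces the factor $x^2$ that cancels the $x^{-2}$ in $\nu$. The only difference is packaging: the paper reaches the \emph{exact} identity
\[
\E\!\left[(X_1^{(n)})^2\right]=\frac{n(n-1)\int_0^1 x^2\,\nu(dx)}{g_n}-\E[X_1^{(n)}]
\]
through the tail-sum formula $\E[Y^2]=\sum_{k\geq 1}(2k-1)\P(Y\geq k)$ and then discards the nonnegative term $\E[X_1^{(n)}]$, whereas you short-circuit this with the pointwise inequality $(b-1)^2\ind_{\{b\geq 2\}}\leq b(b-1)$. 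Your route is slightly more direct; the paper's has the minor advantage of giving an equality before bounding. Your observation that only the finiteness of $\Lambda$ is needed is correct and applies equally to the paper's proof, since $2\int_0^1 t\rho(t)\,dt=\int_0^1 x^2\,\nu(dx)=\Lambda((0,1))$.
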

\begin{proof}
  Using the identity $\E[Y^2]=\sum_{k\geq 1} (2k-1)\P(Y\geq k)$ for
  $\N$-valued random variables, we get 
\begin{align*}
\E\left[\left(X^{(n)}_1\right)^2\right]
&=\frac{\int_0^1
 \sum_{k\geq 1}  (2k-1) \P(B_{n,x} \geq k+1) \nu(dx)}{g_n} \\
&=\frac{\int_0^1
\left(\sum_{k\geq 1}  (2(k+1)-1) \P(B_{n,x} \geq k+1)- 2\sum_{k\geq 1}
  \P(B_{n,x} \geq k+1 )\right)  \nu(dx)}{g_n} \\
&=\frac{\int_0^1
\left(\E[B_{n,x}^2] - 2 \E[B_{n,x}] + \P(B_{n,x} \geq 1) \right)
\nu(dx)}{g_n} \\ 
&=\frac{\int_0^1
\left(\E[B_{n,x}^2] - \E[B_{n,x}] \right)
\nu(dx)}{g_n} -\E[X_1^{(n)}]\\ 
&=\frac{\int_0^1 n(n-1) x^2  \nu(dx) }{g_n} -
\E[X_1^{(n)}] \\
&= 2n(n-1) \frac{\int_0^1 t \rho(t) \; dt }{g_n}- \E[X_1^{(n)}],
\end{align*}
where  we  have  used  \reff{eq=E[Xn1}  for the  fourth  equality.   Use
$\int_0^1  t  \rho(t) \;  dt  <\infty  $  and $\E[X_1^{(n)}]\geq  0$  to
conclude.
\end{proof}

We consider $\phi_n$ the Laplace transform of $X_1^{(n)}$: for $u\geq 0$, 
$   \phi_n(u)=\E[\expp{-u X^{(n)}_1}]$.
\begin{lem}
\label{lm:phin}
   Assume  that $\rho(t)=C_0t^{-\alpha} +O(t^{-\alpha+\zeta})$  for some
  $C_0>0$ and $\zeta>0$. Let $\varepsilon_0>0$. Recall $\varphi_n$ given
  by \reff{eq:fn}. 
Then we have, for $n\geq 2$, 
\begin{equation}
   \label{eq:phin}
\phi_n(u)=1 -\frac{u}{\gamma} + \frac{u^\alpha}{\gamma} + R(n,u) ,
\end{equation}
where $\displaystyle R(n,u)= \left(u
  \varphi_n + u^2 \right)h(n,u)$ with $\sup_{u\in [0,K], n\geq 2}
|h(n,u)|<\infty $.  
\end{lem}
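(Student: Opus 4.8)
The plan is to start from an exact generating-function identity for $\phi_n$ and reduce the statement to the asymptotics of a single regularised integral. Summing $\P(X^{(n)}_1=\ell)=P(n,n-\ell)=\inv{g_n}\int_0^1\P(B_{n,x}=\ell+1)\,\nu(dx)$ against $\expp{-u\ell}$ and using $\sum_{j\ge 0}\expp{-uj}\P(B_{n,x}=j)=(1-qx)^n$ with $q=1-\expp{-u}$, I would first obtain
\[
g_n\phi_n(u)=\expp{u}\int_0^1\Big[(1-qx)^n-(1-x)^n-\expp{-u}\,nx(1-x)^{n-1}\Big]\,\nu(dx).
\]
Together with $g_n=\int_0^1\big[1-(1-x)^n-nx(1-x)^{n-1}\big]\,\nu(dx)$ from \reff{eq:gn} and $g_n\E[X^{(n)}_1]=\int_0^1\big[nx-1+(1-x)^n\big]\,\nu(dx)$ from \reff{eq=E[Xn1}, I then write $\phi_n(u)=1-u\E[X^{(n)}_1]+\big(\phi_n(u)-1+u\E[X^{(n)}_1]\big)$. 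Lemma \ref{lem:M1} turns the term $1-u\E[X^{(n)}_1]$ into $1-u/\gamma+u\varphi_n\,O(1)$, which already has the announced form; everything else is contained in the centred quantity $\phi_n(u)-1+u\E[X^{(n)}_1]$.

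The algebraic heart of the argument is to check that, letting $F_n(x,u)$ be the integrand of $g_n\phi_n(u)-g_n+u\,g_n\E[X^{(n)}_1]$, the subtracted terms recombine as
\[
F_n(x,u)=\expp{u}\big[(1-qx)^n-1+nqx\big]+(1+u-\expp{u})\big[(1-x)^n-1+nx\big].
\]
This follows by expanding in powers of $x$ and noting that both the constant and the linear coefficient of $F_n$ vanish, the cancellation resting only on $\expp{u}(1-\expp{-u})=\expp{u}-1$. Setting $I_n(q)=\int_0^1\big[(1-qx)^n-1+nqx\big]\,\nu(dx)$, this yields the exact identity
\[
\phi_n(u)-1+u\E[X^{(n)}_1]=\expp{u}\,\frac{I_n(q)}{g_n}+(1+u-\expp{u})\,\frac{I_n(1)}{g_n}.
\]
The rewriting is useful because each bracket is $O(x^2)$ near $0$, hence $\nu$-integrable as $\rho(x)=O(x^{-\alpha})$ with $\alpha<2$; moreover $1+u-\expp{u}=O(u^2)$, so the second term is $O(u^2)$ once $I_n(1)/g_n$ is shown to be bounded (the special case $q=1$ of the analysis below).

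It then remains to estimate $I_n(q)$ uniformly for $u\in[0,K]$, i.e.\ $q\in[0,1-\expp{-K}]$. Integrating by parts against $\nu(dx)=-d\rho(x)$ (the boundary terms vanishing since the bracket is $O(x^2)$ and $\rho=O(x^{-\alpha})$) gives $I_n(q)=nq\int_0^1\big[1-(1-qx)^{n-1}\big]\rho(x)\,dx$. Inserting $\rho(x)=C_0x^{-\alpha}+O(x^{-\alpha+\zeta})$ and rescaling $y=nqx$, the leading part converges to $C_0(nq)^\alpha\int_0^\infty(1-\expp{-y})y^{-\alpha}\,dy=\frac{C_0\Gamma(2-\alpha)}{\gamma}\,(nq)^\alpha$, the last integral being evaluated by one integration by parts as in \reff{eq:gamma-rel}. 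Dividing by $g_n=C_0\Gamma(2-\alpha)n^\alpha+O(\cdot)$ from Lemma \ref{lem:dlg} gives $I_n(q)/g_n=q^\alpha/\gamma+(\text{error})$, and since $\expp{u}q^\alpha/\gamma=u^\alpha/\gamma+O(u^{\alpha+1})=u^\alpha/\gamma+O(u^2)$ on $[0,K]$, the main term of $\expp{u}I_n(q)/g_n$ is exactly $u^\alpha/\gamma$.

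The main obstacle is the bookkeeping that forces the error into the prescribed shape $(u\varphi_n+u^2)h(n,u)$ with $h$ bounded on $[0,K]\times\{n\ge 2\}$. Three sources must be controlled: (i) the $O(x^{-\alpha+\zeta})$ term in $\rho$, which after rescaling produces a contribution of order $(nq)^{\alpha-\zeta}/g_n\asymp n^{-\zeta}q^{\alpha-\zeta}$ and dictates precisely the three regimes of $\varphi_n$ in \reff{eq:fn} (the integral $\int_0^\infty(1-\expp{-y})y^{-\alpha+\zeta}\,dy$ converges only for $\zeta<\gamma$, with a logarithmic divergence at $\zeta=\gamma$ accounting for the $\varepsilon_0$); (ii) replacing $(1-y/n)^{n-1}$ by $\expp{-y}$ and the cutoff $nq$ by $+\infty$, both of relative order $1/n\le\varphi_n$; and (iii) the remainder of Lemma \ref{lem:dlg}. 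Since $\gamma<1$, in regime (i) one has $u^{\alpha-\zeta}\le K^{\alpha-\zeta-1}\,u$ and in general $u^{\alpha-1}\le K^{\alpha-1}$, so each of these errors is $\le c\,u\varphi_n$, while the genuinely quadratic pieces $(1+u-\expp{u})$ and $(\expp{u}q^\alpha-u^\alpha)$ are $\le c\,u^2$; collecting everything gives $R(n,u)=(u\varphi_n+u^2)h(n,u)$ as required. Note that a direct moment expansion cannot succeed here, since \reff{eq:M2} only yields $\E[(X^{(n)}_1)^2]\le C n^2/g_n\asymp n^{2-\alpha}\to\infty$; the divergence of the second moment is exactly what the non-analytic term $u^\alpha/\gamma$ absorbs.
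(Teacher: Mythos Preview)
Your proposal is correct and, once unpacked, coincides with the paper's proof: your identity
\[
\phi_n(u)-1+u\,\E[X_1^{(n)}]=\expp{u}\,\frac{I_n(q)}{g_n}+(1+u-\expp{u})\,\frac{I_n(1)}{g_n}
\]
is literally the paper's formula \reff{eq:phinDL} after observing that $I_n(1)=g_n\E[X_1^{(n)}]$ (this is \reff{eq:E[Xn1]prime}) and $I_n(q)=-nqA$, so that your two terms recombine into $(1-\expp{u})\frac{n}{g_n}A+(1-\expp{u})\E[X_1^{(n)}]$. The only methodological difference lies in how the integral $A$ (equivalently $I_n(q)$) is estimated: you rescale $y=nqx$ and compare with $\int_0^\infty(1-\expp{-y})y^{-\alpha}\,dy$, whereas the paper performs a further integration by parts to reach $\int_0^1(1-at)^{n-2}\big(\int_t^1\rho\big)\,dt$, inserts $\int_t^1\rho=\frac{1}{\gamma}t\rho(t)+O(f(t))$, and evaluates the resulting Beta integrals exactly via \reff{eq:gamma-rel} and Stirling. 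The paper's route makes the uniform error bookkeeping slightly cleaner; in particular your error (ii), ``of relative order $1/n$'', is not literally true for $y$ comparable to $n$, and is most safely handled by the exact evaluation $\int_0^1(1-t)^{n-2}t^{1-\alpha}\,dt=\Gamma(n-1)\Gamma(2-\alpha)/\Gamma(n+1-\alpha)$ rather than through the $\expp{-y}$ approximation.
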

\begin{proof}
We have 
\begin{multline*}
\phi_n(u)\\
\begin{aligned}
   &=\E\left[\expp{-uX_1^{(n)}}\right]
=\sum_{k=1}^{n-1}\expp{-uk}\P(X_1^{(n)}=k)\\
&=\sum_{k=1}^{n-1}\expp{-uk}\P(X_1^{(n)}\ge k) -
\sum_{k=2}^{n}\expp{-u(k-1)}\P(X_1^{(n)}\ge k)\\ 
&=\expp{-u} + \sum_{k=2}^{n-1}\expp{-uk}(1-\expp{u})\P(X_1^{(n)}\ge k)\\ 
&=\expp{-u} +
(1-\expp{u})\sum_{k=2}^{n-1}\frac{\expp{-uk}}{g_n}
\int_0^1\frac{n!}{k!(n-k-1)!}t^k(1-t)^{n-k-1}\rho(t)\;  
dt\\  
&=\expp{-u} +
(1-\expp{u})\frac{n}{g_n}\int_0^1\!\!\left[(1-t(1-\expp{-u}))^{n-1} \!\!-
  (1-t)^{n-1} \!\!- (n-1)\expp{-u}t(1-t)^{n-2}\right]\rho(t)dt\\ 
&=1 + (1-\expp{u})\frac{n}{g_n}\int_0^1\left[(1-t(1-\expp{-u}))^{n-1} -
  (1-t)^{n-1} \right]\rho(t)\; dt ,
\end{aligned}
\end{multline*}
where we used \reff{eq:gn} for the last equality. 
Using \reff{eq:E[Xn1]prime}, this implies 
\begin{equation}
   \label{eq:phinDL}
\phi_n(u)=
1 + (1-\expp{u})\frac{n}{g_n}A + (1-\expp{u})\E[X^{(n)}_1].
\end{equation}
with $\displaystyle  A=\int_0^1\left[(1-t(1-\expp{-u}))^{n-1} -1
 \right]\rho(t)\; dt  $. 

Thanks to Lemma \ref{lem:M1}, we have that 
\begin{equation}
   \label{eq:h1}
(1-\expp{u})\E[X^{(n)}_1]= -\frac{u}{\gamma} + \left(u^2+
  u\varphi_n\right) h_1(n,u),
\end{equation}
where $\sup_{u\in [0,K], n\geq 2} |h_1(n,u)|<\infty $.

To compute $A$, we set $a=(1-\expp{-u})$ and
$f(t)=t^{-\max(\alpha-1-\zeta,0)} + t^{-\varepsilon_0}
\ind_{\{\alpha-\zeta=1\}} 
$.  An integration by part gives 
\begin{align*}
   A
&=-a(n-1)\int_0^1(1-at)^{n-2}\left(\int_t^1 \rho(r) \;
  dr\right)\; dt\\ 
&=- a (n-1) C_0\int_0^1 (1-at)^{n-2}\left(\frac{t^{1-\alpha}}{\gamma}
  +O(f(t)) \right)\; dt\\ 
&=-A_1+A_2,
\end{align*}
with $\displaystyle 
A_1=
 \frac{a (n-1)}{\gamma} C_0 \int_0^1 (1-at)^{n-2}t^{1-\alpha} \; dt$
and $\displaystyle 
A_2=  a(n-1) \int_0^1 (1-at)^{n-2} O(f(t))\; dt $.
We have 
\begin{align*}
   A_1
&= \frac{a^{\alpha-1}  (n-1)}{\gamma}  C_0\int_0^a
(1-t)^{n-2}t^{1-\alpha} \; dt \\
&=\frac{a^{\alpha-1} (n-1)}{\gamma} C_0 
\int_0^1
(1-t)^{n-2}t^{1-\alpha} \; dt - 
 \frac{a^{\alpha-1}
   (n-1)}{\gamma}  C_0\int_a^1 (1-t)^{n-2}t^{1-\alpha} \; dt \\
&=\frac{a^{\alpha-1} (n-1)}{\gamma} C_0
\frac{\Gamma(n-1)\Gamma(2-\alpha)}{\Gamma(n+1-\alpha)}  - 
 \frac{a^{\alpha-1}
   (n-1)}{\gamma}  C_0\int_a^1 (1-t)^{n-2}t^{1-\alpha} \; dt 
\end{align*}
Since $a\geq 0$, we have for $u\in [0,K]$ and $n\geq 2$
\[
0\leq \frac{a^{\alpha-1}
  (n-1)}{\gamma}  \int_a^1 (1-t)^{n-2}t^{1-\alpha} \; dt 
\leq \frac{
  (n-1)}{\gamma}  \int_a^1 (1-t)^{n-2} \; dt \leq \inv{\gamma}.
\]
Using \reff{eq:expansionGamma} and Lemma \ref{lem:dlg}, we get 
 $\displaystyle |A_1-\frac{a^{\alpha-1} }{\gamma}
\frac{g_n}{n}|\leq c (1+ n^{\alpha-1-\min(\zeta,1)})
\leq cn^{\max(\alpha-1-\zeta,0)}
$, where 
$c$ does not  depend on $n$ and $u\geq 0$. 
We also have, using \reff{eq:gamma-rel} and \reff{eq:expansionGamma}
\[
|A_2|
\leq c a(n-1) \int_0^1 (1-at)^{n-2} f(t)\; dt 
 \leq  c (n^{\max(\alpha-1-\zeta,0)}+ n^{\varepsilon_0}
\ind_{\{\alpha-\zeta=1\}}).
\]
We deduce, using  Lemma \ref{lem:dlg} twice,  that 
\[
|A + \frac{a^{\alpha-1} }{\gamma}
\frac{g_n}{n}|\leq c(n^{\max(\alpha-1-\zeta,0)}+ n^{\varepsilon_0}
\ind_{\{\alpha-\zeta=1\}})\leq c \frac{g_n}{n}\varphi_n .
\]
We deduce that 
\begin{equation}
   \label{eq:h2}
(1-\expp{u}) \frac{n}{g_n}A = (1-\expp{u}) \left(-
  \frac{(1-\expp{-u})^{\alpha-1}}{\gamma} +\varphi_n  O(1)\right)
= \frac{u^\alpha}{\gamma} +\left(u^{\alpha+1}+
  u\varphi_n \right)h_2(n,u),
\end{equation}
where $\sup_{u\in [0,K], n\geq 2} |h_2(n,u)|<\infty $. 
Then use the expression of $\phi_n$ given by  \reff{eq:phinDL} as well
as \reff{eq:h1} and \reff{eq:h2} to end the proof. 
\end{proof}

\section{Asymptotics for the number of jumps}
\label{sec:taun}
Let $\alpha\in (1,2)$. We  assume   that $\rho(t)=C_0t^{-\alpha}
+O(t^{-\alpha+\zeta})$  for some 
  $C_0>0$ and $\zeta>1-1/\alpha$.

Let $V=(V_t,t\geq 0)$ be a $\alpha$-stable L\'evy process with no positive
jumps   (see  chap.    VII   in  \cite{b:pl})   with  Laplace   exponent
$\psi(u)=u^\alpha/\gamma$: for all $u\geq 0$, $\E[\expp{-uV_t}]=\expp{t
  u^\alpha/\gamma}$.

Lemma  \ref{lem:cvloiXn} implies  that $(X_1^{(n)},  \ldots, X^{(n)}_k)$
converges in distribution to $(X_1, \ldots, X_k)$ where $(X_k, k\geq 1)$
is a sequence of independent  random variables distributed as $X$. 
Using Lemma~\ref{lem:cvloiXn} and \reff{eq:expansionGamma}, we get that
${\mathbb P}(X\geq k)\sim_{+\infty
}\inv{\Gamma(2-{\alpha})}k^{-{\alpha}}$. Hence 
Proposition 9.39 in \cite{b:p} implies that the law of $X$ is in the domain of  attraction of the ${\alpha}$-stable distribution. We set 
$\displaystyle  W^{(n)}_t=n^{-1/\alpha} \sum_{k=1}^{\lfloor  nt \rfloor}(X_k-\inv{\gamma})$
for $t\in[0,{\gamma}]$. An easy calculation using the Laplace transform of $X$ shows that for fixed $t$ the sequence $W^{(n)}_t$ converges in distribution to $V_t$. Then using Theorem 16.14 in \cite{k:fmp}, we get that the process  $(W^{(n)}_t,  t\in[0,\gamma])$ converges in distribution to $V=(V_t,   t\in[0,\gamma])$.
We shall give in Corollary \ref{cor:cvVn} a similar result with
$X_k$ replaced by $X^{(n)}_k$. 

We  first  give  a  proof  of  the convergence  of  $\tau_n$,  see  also
\cite{gy:ncl} and \cite{dimr:rrc} for  a different proof.   We will use
that $\displaystyle 
\sum_{i=1}^{\tau_n      }(X^{(n)}_i-\inv{\gamma})=n-1      -\frac{\tau_n
}{\gamma}$.

\begin{prop}\label{prop:tau_n}
  We assume  that $\zeta>1-1/\alpha$. We have  the following convergence
  is distribution
\[
n^{-\inv{\alpha}}\left(n-\frac{\tau_n
  }{\gamma}\right)
\; \xrightarrow[n\rightarrow \infty ]{\text{(d)}} \; 
V_\gamma. 
\]
\end{prop}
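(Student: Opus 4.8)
The plan is to reduce everything to the exact identity $\sum_{i=1}^{\tau_n}(X^{(n)}_i-\inv{\gamma})=n-1-\tau_n/\gamma$ recorded just before the statement. Setting $Z_n=n^{-1/\alpha}(n-1-\tau_n/\gamma)$, which differs from $n^{-1/\alpha}(n-\tau_n/\gamma)$ by $n^{-1/\alpha}\to0$, it suffices by Slutsky to prove $Z_n\xrightarrow[n\to\infty]{\text{(d)}}V_\gamma$; I would do so by computing the Laplace transform and checking $\E[\expp{-uZ_n}]\to\expp{u^\alpha}=\E[\expp{-uV_\gamma}]$ for every $u\ge0$ (using $\E[\expp{-uV_t}]=\expp{tu^\alpha/\gamma}$ at $t=\gamma$), supplemented by a tightness argument. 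One could instead read $Z_n$ as a rescaled random walk stopped at the random time $\tau_n/n$ and invoke a functional limit theorem together with $\tau_n/n\to\gamma$; I would avoid this, because near absorption the chain visits small states, where the increment estimate of Lemma \ref{lm:phin} is no longer negligible, so controlling the walk up to the endpoint $t=\gamma$ is delicate. Instead I would exploit the \emph{global} constraint $\sum_{i=1}^{\tau_n}X^{(n)}_i=n-1$ through an exponential martingale, which only requires the per-step remainders to be summable along the trajectory.

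Fix $u\ge0$, set $\lambda=un^{-1/\alpha}$ and $\phi_j(\lambda)=\E[\expp{-\lambda X^{(j)}_1}]$. Since $\E[\expp{-\lambda X^{(n)}_m}\mid\mathcal F_{m-1}]=\phi_{Y_{m-1}}(\lambda)$, the process $N_m=\expp{-\lambda\sum_{i=1}^m X^{(n)}_i}\prod_{i=1}^m\phi_{Y_{i-1}}(\lambda)^{-1}$ is a martingale with $N_0=1$, constant after $\tau_n$ (as $\phi_1\equiv1$). Because $\tau_n\le n-1$ is a \emph{bounded} stopping time, optional stopping applies with no integrability issue and gives $\E[N_{\tau_n}]=1$; inserting $\sum_{i=1}^{\tau_n}X^{(n)}_i=n-1$ and the expansion $\phi_j(\lambda)=1-\lambda/\gamma+\lambda^\alpha/\gamma+R(j,\lambda)$ of \reff{eq:phin}, this becomes
\[
1=\E\left[\exp\left(-uZ_n-\frac{\tau_n}{\gamma n}\,u^\alpha+S_n\right)\right],\qquad S_n=-\sum_{i=1}^{\tau_n}\left(\log\phi_{Y_{i-1}}(\lambda)+\frac{\lambda}{\gamma}-\frac{\lambda^\alpha}{\gamma}\right).
\]

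Two facts then feed into this identity. First, $S_n\to0$: from \reff{eq:phin} and $\log(1+x)=x+O(x^2)$ one gets $|\log\phi_{Y_{i-1}}(\lambda)+\lambda/\gamma-\lambda^\alpha/\gamma|\le c(\lambda\varphi_{Y_{i-1}}+\lambda^2)$ with $\varphi$ as in \reff{eq:fn}; since the chain is strictly decreasing its visited states are pairwise distinct, whence $|S_n|\le c(\lambda\sum_{j=2}^n\varphi_j+n\lambda^2)$. With $\lambda=un^{-1/\alpha}$ the second term is $o(1)$ because $\alpha<2$, while the first is of order $n^{1-1/\alpha-\min(\zeta,\alpha-1)}$; as $\alpha-1>1-1/\alpha$, this exponent is negative \emph{exactly} under the standing assumption $\zeta>1-1/\alpha$, so $S_n\to0$ deterministically. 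Second, $\tau_n/n\to\gamma$ in probability, which follows from the drift $\E[X^{(n)}_1]=1/\gamma+O(\varphi_n)$ of Lemma \ref{lem:M1} by a fluid-limit / first-passage inversion, the deterministic bound $\tau_n\le n-1$ supplying the matching upper bound. Hence $\frac{\tau_n}{\gamma n}u^\alpha\to u^\alpha$, so $C_n:=\exp(-\frac{\tau_n}{\gamma n}u^\alpha+S_n)$ converges in probability to $\expp{-u^\alpha}$ while staying in a fixed compact subinterval of $(0,\infty)$.

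Finally I pass to the limit. Running the identity with an arbitrary $u'\ge0$ and using that the corresponding correction is bounded below by a positive constant yields $\sup_n\E[\expp{-u'Z_n}]<\infty$ for every $u'$; in particular $\{\expp{-uZ_n}\}_n$ is uniformly integrable. Writing $1=\E[\expp{-uZ_n}C_n]$ and combining uniform integrability with $C_n\to\expp{-u^\alpha}$ in probability gives $\E[\expp{-uZ_n}]\to\expp{u^\alpha}$ for all $u\ge0$. The family $\{Z_n\}$ is tight: the uniform Laplace bound controls the left tail, while a Wald-type first-moment bound $\sup_n|\E[Z_n]|<\infty$ (again from Lemma \ref{lem:M1} via the same summability) controls the right tail through Markov's inequality. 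Any weak subsequential limit then has Laplace transform $\expp{u^\alpha}$ on $(0,\infty)$, identifying it as the law of $V_\gamma$, so $Z_n\xrightarrow{\text{(d)}}V_\gamma$ and the proposition follows. The crux is the summability estimate $|S_n|\le c(\lambda\sum_{j}\varphi_j+n\lambda^2)\to0$: it is the one place where $\zeta>1-1/\alpha$ is genuinely used, and it is precisely what lets the global martingale identity bypass the fine behaviour of the chain at small states.
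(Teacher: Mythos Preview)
Your approach is essentially the paper's: the exponential martingale $N_m$, optional stopping at the bounded time $\tau_n$, the expansion $\phi_j(\lambda)=1-\lambda/\gamma+\lambda^\alpha/\gamma+R(j,\lambda)$ from Lemma~\ref{lm:phin}, and the summability of the remainders along the trajectory are exactly the ingredients the paper assembles (your bound $|S_n|\le c(\lambda\sum_j\varphi_j+n\lambda^2)$ is the content of Lemma~\ref{lm:maj_rest}). Your passage to the limit via uniform integrability of $\{\expp{-uZ_n}\}$ is a repackaging of the paper's Cauchy--Schwarz split $I_1+I_2$, and your tightness-plus-identification step plays the role of the paper's appeal to \cite{mrs:nmgf}.

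The one substantive gap is your justification of $\tau_n/n\to\gamma$ in probability. You invoke ``a fluid-limit / first-passage inversion'' from the drift estimate of Lemma~\ref{lem:M1}, with ``the deterministic bound $\tau_n\le n-1$ supplying the matching upper bound''. But $\tau_n\le n-1$ only yields $\tau_n/n\le 1$, not $\le\gamma$; and a genuine fluid limit for $Y^{(n)}_{\lfloor ns\rfloor}/n$ would have to be pushed to the absorption time, precisely the small-state regime you said you wished to avoid. The drift estimate alone gives $\E[\tau_n]/n\to\gamma$ via optional stopping, but convergence in probability needs a second-moment or concentration input that you have not supplied. The paper closes this gap as Lemma~\ref{lm:1st_bound_for_tau_n} by rerunning \emph{your own} martingale identity at the slightly faster scale $a_n=n^{-1/\alpha-\varepsilon}$: then $\tau_n a_n^\alpha\le n^{-\alpha\varepsilon}\to0$, the remainder bound still applies, and one reads off $\E[\expp{-ua_n(n-1-\tau_n/\gamma)}]\to 1$ directly, hence $a_n(n-1-\tau_n/\gamma)\to0$ in probability. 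This is the natural patch inside your framework and costs no new ideas.
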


\begin{proof}
  Using  \cite{mrs:nmgf},  it is  enough  to  prove that  $\displaystyle
  \lim_{n\rightarrow                       \infty                      }
  \E[\expp{-un^{-\inv{\alpha}}\left(n-\frac{\tau_n
      }{\gamma}\right)}]=\expp{u^\alpha}$   for  all  $u\geq   0$.   Let
  $\cy=(\cy_k,  k\geq 0)$  be the  filtration generated  by  $Y$. Notice
  $\tau_n$ is an $\cy$-stopping time.  For fixed $n$, and for any $v\geq
  0$, the process $(M_{v,k},k\geq 0)$ defined by
\[
M_{v,k}=\prod_{i=1}^{k}  \left(\exp{-v X^{(n)}_i
  -\log\phi_{Y^{(n)}_{i-1}}(v)}  \right)
\]
is a bounded martingale w.r.t. the filtration $\cy$. Notice that
$\E[M_{v,k}]=1$. As $X_i=0$ for
$i>\tau_n$,  we also have 
\begin{equation}\label{eq:def of M_{v,k}}
M_{v,k}=\prod_{i=1}^{k\wedge\tau_n } \left(\exp{-v X^{(n)}_i
    -\log\phi_{Y^{(n)}_{i-1}}(v)} \right).
\end{equation}

Let $u\geq 0$ and consider a non-negative sequence $(a_n, n\geq 1)$
which converges to $0$. Using \reff{eq:phin}, we get that :
\begin{align*}
M_{ua_n,k}
&=\exp\left(-ua_n\sum_{i=1}^{k\wedge\tau_n }\ixen_i
-\sum_{i=1}^{k\wedge\tau_n }\left( -\frac{ua_n}{\gamma} + \frac{u^\alpha
    a_n^\alpha}{\gamma} +R(\igreken_{i-1},ua_n)\right)\right). 
\end{align*}
In particular, we have 
\begin{equation}\label{eq:Mun,taun}
M_{ua_n,\tau_n } = 
\exp{\left(- u a_n(n-1 - \frac{\tau_n }{\gamma})
-\frac{u^\alpha \tau_n  a_n^\alpha}{\gamma}
-\sum_{i=1}^{\tau_n } R(Y_{i-1}^{(n)}, ua_n)\right) }.
\end{equation}

We first give an upper bound for $\sum_{i=1}^{\tau_n } R(Y_{i-1}^{(n)},
ua_n)$. 
\begin{lem}\label{lm:maj_rest}
We assume that $\zeta>1-1/\alpha$.    Let $K>0$. Let ${\eta}\geq \inv{\alpha}$. There exist ${\varepsilon_1}>0$ and
   $C_{\ref{eq:upperboundR}}(K)$ a finite constant  such that for all
    $n\geq 1$ and $u\in [0,K]$, a.s. with $a_n=n^{-\eta}$, 
\begin{equation}\label{eq:upperboundR}
\sum_{i=1}^{\tau_n } \left|R(Y_{i-1}^{(n)}, ua_n)\right|\leq
C_{\ref{eq:upperboundR}}(K)n^{-\varepsilon_1}. 
\end{equation}
\end{lem}

\begin{proof}
  Notice that $\tau_n\leq n-1$. We have seen in Lemma~\ref{lm:phin} that
  $\displaystyle  R(n,u)= \left(u  \varphi_n +  u^2  \right)h(n,u)$ with
  $\bar{h}(K)=\sup_{u\in   [0,K],  n\geq   2}   |h(n,u)|<\infty  $   and
  $\varphi_n$ given by \reff{eq:fn}. We have $\displaystyle
  2-\alpha-\inv{\alpha}=-\alpha(1-1/\alpha)^2<0$. As  $\varepsilon_0>0$ is
  arbitrary in \reff{eq:fn}, we can take $\varepsilon_0$ small enough so that
  $1-\alpha+\varepsilon_0<0$ and $2-\alpha+\varepsilon_0-1/\alpha<0$. We
  have  
\[
a_n\sum_{i=1}^{\tau_n }\varphi_{{Y_{i-1 }^{(n)}}} \leq
  n^{-1/\alpha} \sum_{j=1}^n \varphi_j \leq c\begin{cases}
   {n^{1-\zeta-\inv{\alpha}} } &\quad \text{if}\quad \zeta<\alpha-1,\\
 {n^{2-\alpha+\varepsilon_0-\inv{\alpha}}} &\quad \text{if}\quad
 \zeta=\alpha-1,\\ 
 n^{2-\alpha-\inv{\alpha}} &\quad \text{if}\quad \zeta>\alpha-1.
  \end{cases}
\]
For   $\varepsilon_1>0$   less   than   the  two   positive   quantities
$\displaystyle       -1+\zeta+\inv{\alpha}$      and      $\displaystyle
-2+\alpha-\varepsilon_0+\inv{\alpha}    $,   we    have   $\displaystyle
a_n\sum_{i=1}^{\tau_n     }\varphi_{{Y_{i-1     }^{(n)}}}     \leq     c
n^{-\varepsilon_1}$.  We deduce that, for $u\in [0,K]$,
\begin{align*}
\sum_{i=1}^{\tau_n } \left|R(Y_{i-1}^{(n)}, ua_n)\right| 
&\leq 
\bar{h}(K)\sum_{i=1}^{\tau_n }\left(\varphi_{Y_{i-1 }^{(n)}}
  ua_n+(ua_n)^2\right)\\ 
&\leq \bar{h}(K)\sum_{j=1}^n\left(\varphi_j Ka_n+(Ka_n)^2\right)\\
&\leq  c \bar{h}(K) (Kn^{-\varepsilon_1}+K^2n^{1-\frac{2}{\alpha}}),
\end{align*}
for  some  constant  $c$  independent  of  $n$,  $u$  and  $K$.   Taking
$\varepsilon_1>0$ small enough    so     that     $\displaystyle
\varepsilon_1<\frac{2}{\alpha}-1$, we then get \reff{eq:upperboundR}.
\end{proof}

Next we prove the following Lemma.
\begin{lem}\label{lm:1st_bound_for_tau_n}
We assume that $\zeta>1-1/\alpha$. Let  ${\varepsilon}>0$. The sequence 
$(n^{-(1/{\alpha})-{\varepsilon}}(n-1-\frac{\tau_n }{\gamma}), n\geq 1) $
converges in probability to $0$. 
\end{lem}

\begin{proof}
We set $a_n=n^{-\inv{\alpha}-\varepsilon}$. 
Notice that 
\[
\expp{- u a_n ( n-1- \frac{\tau_n}{\gamma})} 
=M_{ua_n, \tau_n }\expp{\frac{u^\alpha \tau_n a_n}{\gamma} +
  \sum_{i=1}^{\tau_n } 
  R(Y_{i-1}^{(n)}, ua_n)}. 
\]
As $\tau_n \leq n-1$, we have $0\leq \tau_n  a_n^\alpha\leq
n^{-\alpha\varepsilon}$. Using  \eqref{eq:upperboundR}, we get for
$u\geq 0$ 
\[
\E[M_{ua_n,\tau_n }]\expp{-C_{\ref{eq:upperboundR}} (u) n^{-\varepsilon_1}}
\leq\E[\expp{-ua_n(n-1-\frac{\tau_n }{{\gamma}})}]
\leq\E[M_{ua_n,\tau_n }]\expp{C_{\ref{eq:upperboundR}}(u)n^{-\varepsilon_1}
  +\frac{u^\alpha   n^{-\alpha \varepsilon} }{\gamma}}.
\]
As $\tau_n$ is bounded, the stopping time theorem gives
$\E[M_{ua_n,\tau_n }]=1$. 
We deduce that,   for all $u\geq 0$,
$\displaystyle  \lim_{n\rightarrow \infty }
\E[\expp{-ua_n(n-1-\frac{\tau_n }{{\gamma}})}]=1$. Using
\cite{mrs:nmgf}, we get the convergence in law of
$a_n(n-1-\frac{\tau_n }{{\gamma}})$ to 0, and then in probability as
the limit is constant. 
\end{proof}

Let $a_n = n^{-\inv{\alpha}}$ and $u\geq 0$. 
We have 
\begin{multline}
\label{eq:diff}
\E\left[\expp{-ua_n(n-1-\frac{\tau_n }{{\gamma}})}\right]\\
\begin{aligned}
&=   \E\left[\expp{-ua_n(n-1-\frac{\tau_n }{{\gamma}})}
\left(1-\expp{-u^\alpha a_n^\alpha(\frac{\tau_n
    }{{\gamma}}-n)}\right)\right]+ \E\left[\expp{-ua_n(n-1-\frac{\tau_n }{{\gamma}})}
\expp{-u^\alpha a_n^\alpha(\frac{\tau_n }{{\gamma}}-n)}\right]\\
&=I_1+I_2,   
\end{aligned}   
\end{multline}
with  $\displaystyle I_1=\E\left[\expp{-ua_n(n-1-\frac{\tau_n }{{\gamma}})}
\left(1-\expp{-u^\alpha a_n^\alpha(\frac{\tau_n
    }{{\gamma}}-n)}\right)\right]$ and $\displaystyle
I_2=\E\left[M_{ua_n,\tau_n }\expp{u^\alpha+\sum_{i=1}^{\tau_n }
    R(Y_{i-1}^{(n)}, ua_n)}\right]$.

Using  \eqref{eq:upperboundR} and  $\E[M_{ua_n,\tau_n }]=1$, we get 
\[
\expp{u^\alpha-C_{\ref{eq:upperboundR}}(u)n^{-\varepsilon_1}}
\leq  I_2
\leq \expp{u^\alpha+C_{\ref{eq:upperboundR}}(u)n^{-\varepsilon_1}}.
\]
This implies that $\displaystyle  \lim_{n\rightarrow \infty }
I_2=\expp{u^\alpha}$. 

We now prove that $\displaystyle \lim_{n\rightarrow \infty } I_1=0$. 
Recall that  $\tau_n \leq n-1$ so that $\tau_n  a_n^\alpha\leq 1$ and
thanks to \eqref{eq:upperboundR}, we get 
\[
\E[\expp{-ua_n(n-1-\frac{\tau_n }{{\gamma}})}]
=\E\left[M_{ua_n,\tau_n }\expp{\frac{u^\alpha \tau_n
      a_n^\alpha}{\gamma}+ \sum_{i=1}^{\tau_n } R(Y_{i-1}^{(n)},
    ua_n)}\right] 
\leq M(u) \E[M_{ua_n,\tau_n }]
= M(u), 
\]
where $M(u)$ is a constant which does not  depend on $n$. By Cauchy-Schwarz' inequality, we get that 
\begin{align*}
I_1=\E\left[\expp{-ua_n(n-1-\frac{\tau_n }{{\gamma}})}
\left(1-\expp{-u^\alpha a_n^\alpha(\frac{\tau_n }{{\gamma}}-n)}\right)\right]^2
&\leq \E\left[\expp{-2ua_n(n-1-\frac{\tau_n }{{\gamma}})}\right]
\E\left[\left(1-\expp{-u^\alpha a_n^\alpha(\frac{\tau_n
      }{{\gamma}}-n)}\right)^2\right]\\ 
&\leq M(2u)
\E\left[\left(1-\expp{-u^\alpha \inv{n}(\frac{\tau_n
      }{{\gamma}}-n)}\right)^2\right]. 
\end{align*} 
Notice $(\inv{n}(\frac{\tau_n }{{\gamma}}-n), n\geq 1)$ is bounded from
below and above by finite constants, and thanks to Lemma
\ref{lm:1st_bound_for_tau_n} it converges to $0$ in probability. Hence,
we deduce that 
\[
\lim_{n\rightarrow \infty } \E\left[\left(1-\expp{-u^\alpha
      \inv{n}(\frac{\tau_n }{{\gamma}}-n)}\right)^2\right]= 0. 
\]
This implies that $\displaystyle \lim_{n\rightarrow \infty } I_1=0$. 

{From} the convergence of $I_1$ and $I_2$, we deduce from \reff{eq:diff}
that $\displaystyle  \lim_{n\rightarrow \infty }
\E\left[\expp{-ua_n(n-1-\frac{\tau_n }{{\gamma}})}\right]
=\expp{u^\alpha}$. 
This ends the proof of the Proposition. 
\end{proof}

We now  give  a  general result. 
\begin{prop}
\label{prop:cvV}
We assume that $\zeta>1-1/\alpha$. Let $f_n:{\mathbb R}_+\to {\mathbb R}_+$ be uniformly 
bounded  functions such that 
\begin{equation*}
 \kappa  =\lim_{n\to\infty }\inv{n}\sum_{k=1}^{\lfloor  n\gamma
  \rfloor}f_n(k/n)^\alpha 
\end{equation*}
exists.  Then we have the following convergence in distribution
\begin{equation}
\label{eq:cvVnfn}
V^{(n)}(f_n):=n^{-\inv{\alpha}}\sum_{k=1}^{{\tau}_n}f_n(k/n)
(X_k^n-\inv{\gamma}) \; \xrightarrow[n\rightarrow \infty ]{\text{(d)}} \; 
 \kappa ^{1/\alpha}  V_1.
\end{equation}
In particular, if  $f:{\mathbb R}_+\to {\mathbb R}_+$ is 
a bounded locally Riemann integrable function, then 
\begin{equation}\label{eq:cvVnf}
V^{(n)}(f)=n^{-\inv{\alpha}}\sum_{k=1}^{{\tau}_n}f(k/n)
(X_k^n-\inv{\gamma}) 
\; \xrightarrow[n\rightarrow \infty ]{\text{(d)}} \; 
 \int_0^\gamma f(t) dV_t,
\end{equation}
where the distribution of $\int_0^\gamma f(t) dV_t$ is characterized by
its Laplace transform: for $u\geq 0$, 
\begin{equation}
   \label{eq:TLfdVt}
{\mathbb E}[\exp(-u\int_0^\gamma f(t)
dV_t)]=\exp\left(\frac {u^{\alpha}}{\gamma}
  \int_0^{\gamma}f^{\alpha}(t)\,dt\right)  .
\end{equation}
\end{prop}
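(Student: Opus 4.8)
The plan is to follow the method used for Proposition \ref{prop:tau_n}: by \cite{mrs:nmgf} it suffices to show that for each fixed $u\geq 0$,
\[
\lim_{n\rightarrow\infty}\E\left[\expp{-u \veen(f_n)}\right]=\expp{\kappa u^\alpha/\gamma},
\]
since $\expp{\kappa u^\alpha/\gamma}$ is exactly the Laplace transform of $\kappa^{1/\alpha}V_1$ (recall $\E[\expp{-u\kappa^{1/\alpha}V_1}]=\expp{(\kappa^{1/\alpha}u)^\alpha/\gamma}$). To that end I would use the time-inhomogeneous version of the martingale from the proof of Proposition \ref{prop:tau_n}: with the deterministic weights $v_i^{(n)}=u n^{-1/\alpha} f_n(i/n)$, the process
\[
M_k=\prod_{i=1}^{k\wedge\tau_n}\expp{-v_i^{(n)} \ixen_i-\log\phi_{\igreken_{i-1}}(v_i^{(n)})}
\]
is a bounded $\cy$-martingale with $\E[M_k]=1$, so that $\E[M_{\tau_n}]=1$ by optional stopping since $\tau_n\leq n-1$.

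Next I would expand the cumulant with Lemma \ref{lm:phin}: writing $\phi_m(v)=1-v/\gamma+v^\alpha/\gamma+R(m,v)$ and using $\log(1+x)=x+O(x^2)$ for the uniformly small arguments $v_i^{(n)}=O(n^{-1/\alpha})$, one gets $\log\phi_{\igreken_{i-1}}(v_i^{(n)})=-v_i^{(n)}/\gamma+(v_i^{(n)})^\alpha/\gamma+\hat R(\igreken_{i-1},v_i^{(n)})$, where $\hat R$ gathers $R$ and the quadratic correction of the logarithm, so $|\hat R(m,v)|\leq |R(m,v)|+c\,v^2$. The linear terms $-v_i^{(n)}/\gamma$ cancel exactly the centering contribution $u n^{-1/\alpha}\gamma^{-1}\sum_i f_n(i/n)$ of $-u\veen(f_n)$, leaving the identity
\[
\expp{-u\veen(f_n)}=M_{\tau_n}\, c_n,\qquad c_n=\expp{\frac{u^\alpha}{\gamma}\frac{1}{n}\sum_{i=1}^{\tau_n}f_n(i/n)^\alpha+\sum_{i=1}^{\tau_n}\hat R(\igreken_{i-1},v_i^{(n)})}.
\]
The remainder sum is handled exactly as in Lemma \ref{lm:maj_rest}, using $v_i^{(n)}\leq K n^{-1/\alpha}$ with $K=u\sup_n\norm{f_n}_\infty$ and the a.s. bound $\sum_{i=1}^{\tau_n}\varphi_{\igreken_{i-1}}\leq\sum_{j=1}^n\varphi_j$, the extra quadratic piece being $\sum_i (v_i^{(n)})^2=O(n^{1-2/\alpha})\to 0$ since $\alpha<2$. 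For the main term, Proposition \ref{prop:tau_n} gives $n^{-1}(\tau_n-\lfloor n\gamma\rfloor)\to 0$ in probability, so $\lfloor n\gamma\rfloor$ may replace the random upper limit and $n^{-1}\sum_{i=1}^{\tau_n}f_n(i/n)^\alpha\to\kappa$ in probability. Thus $c_n$ converges in probability to $\ell:=\expp{\kappa u^\alpha/\gamma}$ and, by the deterministic bounds above, is a.s. bounded from above and below by positive constants, uniformly in $n$.

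The main obstacle is to pass from $c_n\to\ell$ to convergence of $\E[M_{\tau_n}c_n]$, since $M_{\tau_n}$ and $c_n$ are correlated and $M_{\tau_n}$ is only known to be of expectation $1$. I would write $\E[M_{\tau_n}c_n]=\ell+\E[M_{\tau_n}(c_n-\ell)]$ and bound the error by Cauchy--Schwarz,
\[
\val{\E[M_{\tau_n}(c_n-\ell)]}\leq\sqrt{\E[M_{\tau_n}^2]}\,\sqrt{\E[(c_n-\ell)^2]}.
\]
Here $\E[(c_n-\ell)^2]\to 0$ by bounded convergence, while $\sup_n\E[M_{\tau_n}^2]<\infty$ is obtained by comparing $M_{\tau_n}^2$ with the martingale $M'_{\tau_n}$ built from the doubled weights $2v_i^{(n)}$: their exponent differs by $\frac{2^\alpha-2}{\gamma}\sum_i(v_i^{(n)})^\alpha+\sum_i\big(\hat R(\igreken_{i-1},2v_i^{(n)})-2\hat R(\igreken_{i-1},v_i^{(n)})\big)$, which the same estimates show to be a.s. bounded, whence $M_{\tau_n}^2\leq c\,M'_{\tau_n}$ a.s. and $\sup_n\E[M_{\tau_n}^2]<\infty$ (as $\E[M'_{\tau_n}]=1$). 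This gives $\lim_n\E[\expp{-u\veen(f_n)}]=\ell$ and hence \reff{eq:cvVnfn}.

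Finally, for \reff{eq:cvVnf} I would apply \reff{eq:cvVnfn} to the constant sequence $f_n=f$. As $f$ is bounded and locally Riemann integrable, $f^\alpha$ is bounded and Riemann integrable on $[0,\gamma]$, so the Riemann sums converge and $\kappa=\int_0^\gamma f^\alpha(t)\,dt$ exists; \reff{eq:cvVnfn} then yields the convergence of $\veen(f)$ to $\kappa^{1/\alpha}V_1$. Its Laplace transform $\E[\expp{-u\kappa^{1/\alpha}V_1}]=\expp{(u^\alpha/\gamma)\int_0^\gamma f^\alpha(t)\,dt}$ coincides with that of the integral $\int_0^\gamma f(t)\,dV_t$ of the deterministic integrand $f\geq 0$ against the spectrally negative stable process $V$ with Laplace exponent $\psi(u)=u^\alpha/\gamma$; this identifies the limit with $\int_0^\gamma f(t)\,dV_t$ and proves \reff{eq:TLfdVt}.
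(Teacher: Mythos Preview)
Your proof is correct and follows essentially the same approach as the paper: the multiplicative $\cy$-martingale, the expansion of $\phi_m$ from Lemma~\ref{lm:phin}, the remainder control of Lemma~\ref{lm:maj_rest}, and a Cauchy--Schwarz step with doubled weights. The only cosmetic difference is in that last step: the paper first isolates the discrepancy $\Lambda_n$ between the sums up to $\tau_n$ and up to $\lfloor n\gamma\rfloor$ and bounds $\E[\expp{-2V^{(n)}(f_n)}]$ via the martingale for $2f_n$, whereas you keep the random upper limit inside $c_n$ and bound $\E[M_{\tau_n}^2]$ via the martingale for $2v_i^{(n)}$---equivalent packagings of the same idea.
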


If we apply this Proposition with step functions, we deduce
the following result. 
\begin{cor}\label{cor:cvVn}
We assume that $\zeta>1-1/\alpha$. Let 
$V^{(n)}_t=V^{(n)}(\ind_{[0,t]})=n^{-1/{\alpha}}\sum_{k=1}^{\lfloor nt\rfloor\wedge\tau_n}(X^{(n)}_k-\inv{\gamma})$
for $t\in [0,{\gamma})$, and $V^{(n)}_\gamma= V^{(n)}(\ind) = n^{-1/\alpha}
\left(n-1-\frac{\tau_n}{\gamma} \right)$. 
The finite-dimensional marginals of the process $(V^{(n)}_t, t\in
[0,\gamma])$ converges in law to those of the process $(V_t,t\in
[0,{\gamma}])$. 
\end{cor}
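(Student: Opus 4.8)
The plan is to deduce the corollary from Proposition \ref{prop:cvV} by the Cram\'er--Wold device, applied to non-negative linear combinations and converted into a statement about Laplace transforms, exactly as in the proof of Proposition \ref{prop:tau_n}. Fix $0\le t_1<\cdots <t_m\le \gamma$ and non-negative reals $u_1,\ldots,u_m$. The key observation is that the map $f\mapsto V^{(n)}(f)$ is linear, and that by construction $V^{(n)}_{t_j}=V^{(n)}(h_j)$, where $h_j=\ind_{[0,t_j]}$ when $t_j<\gamma$ and $h_j=\ind$ when $t_j=\gamma$. This is precisely why the endpoint is defined through $\ind$ rather than $\ind_{[0,\gamma]}$: it keeps all $\tau_n$ terms, so that no summand is lost between $\lfloor n\gamma\rfloor$ and $\tau_n$. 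Hence
\[
\sum_{j=1}^m u_j V^{(n)}_{t_j}=V^{(n)}(g),\qquad g:=\sum_{j=1}^m u_j h_j,
\]
and $g$ is a non-negative, bounded step function, hence bounded and locally Riemann integrable on $[0,\gamma]$.

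I would then invoke \reff{eq:cvVnf}--\reff{eq:TLfdVt} directly: $V^{(n)}(g)$ converges in distribution to $\int_0^\gamma g(t)\,dV_t$, whose Laplace transform is $\expp{\inv{\gamma}\int_0^\gamma g(t)^\alpha\,dt}$. It remains to identify this limit with $\sum_{j}u_j V_{t_j}$. Writing $t_0=0$ and $U_i=\sum_{j\ge i}u_j$, one checks that $g\equiv U_i$ on $(t_{i-1},t_i)$, so that $\inv{\gamma}\int_0^\gamma g^\alpha=\inv{\gamma}\sum_{i=1}^m (t_i-t_{i-1})U_i^\alpha$. On the other hand, the telescoping identity $\sum_j u_j V_{t_j}=\sum_i U_i(V_{t_i}-V_{t_{i-1}})$ together with the independence and stationarity of the increments of $V$ and $\E[\expp{-uV_t}]=\expp{tu^\alpha/\gamma}$ yields the same expression. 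Thus $\E[\expp{-\sum_j u_j V^{(n)}_{t_j}}]\to \E[\expp{-\sum_j u_j V_{t_j}}]$ for every $u_1,\ldots,u_m\ge 0$.

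The cleanest way to upgrade this to convergence of the finite-dimensional marginals is to pass through the increments $V^{(n)}_{t_i}-V^{(n)}_{t_{i-1}}=V^{(n)}(\ind_{(t_{i-1},t_i]})$. Applying the previous step to the disjointly supported indicators $\ind_{(t_{i-1},t_i]}$ shows that their joint one-sided Laplace transform converges to a product, i.e. to the transform of the independent stable increments of $V$. Taking all but one $U_i$ equal to $0$ is the univariate situation handled by \cite{mrs:nmgf}, so each marginal increment converges; this gives tightness of $(V^{(n)}_t)_t$, and the limiting product Laplace transform pins down any subsequential limit. Hence the increment vectors converge in law to independent stable increments, and summing (continuous mapping) gives convergence of $(V^{(n)}_{t_1},\ldots,V^{(n)}_{t_m})$.

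The step I expect to be the main obstacle is exactly this last upgrade: because the limit is spectrally one-sided, its Laplace transform is finite only on a half-space, so $\expp{-\langle u,\cdot\rangle}$ is unbounded and convergence of Laplace transforms does not follow formally from weak convergence along subsequences. The argument therefore leans on the transfer theorem of \cite{mrs:nmgf} (already used for $\tau_n$) to turn one-sided Laplace convergence into convergence in law; the reduction to \emph{independent} increments is what lets the multivariate conclusion follow from repeated univariate applications, rather than requiring a genuinely multivariate uniqueness statement.
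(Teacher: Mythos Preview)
Your proposal matches the paper's approach: the paper's entire proof is the single sentence preceding the corollary, ``If we apply this Proposition with step functions, we deduce the following result,'' and you have spelled out precisely this application with $g=\sum_j u_j h_j$. Your careful discussion of how to pass from one-sided Laplace convergence to joint convergence in law (via increments and repeated use of \cite{mrs:nmgf}) fills in a detail the paper leaves implicit.
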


\begin{proof}
Thanks to \cite{mrs:nmgf},  it is enough to prove that 
\[
{\mathbb E}[\exp(-uV^{(n)}(f_n))]\; \xrightarrow[n\rightarrow \infty
]{} \;   \expp { \kappa   u^\alpha/\gamma}.
\]
Taking $uf_n$ as $f_n$, we shall only consider the case $u=1$. 

We set $a=\sup_{n\geq 1,x\geq 0}|f_n(x)|$ and for any bounded function
$g$, 
\begin{equation*}
A_n(g)
=
\exp{
\sum_{k=1}^{{\tau}_n}
\left(
-n^{-1/{\alpha}}g(k/n)X^{(n)}_k
-\log\phi_{Y^{(n)}_{k-1}}(n^{-\inv{\alpha}}g(k/n))
\right)
}.
\end{equation*}
A martingale argument provides that ${\mathbb E}[A_n(g)]=1$. Using
\reff{eq:phin}, we get that : 
\begin{align*}
A_n(g)
&=
\exp
{\left(
-n^{-1/{\alpha}}\sum_{k=1}^{{\tau}_n}g(k/n)(X^{(n)}_k-\inv{\gamma})
-n^{-1}\sum_{k=1}^{{\tau}_n}\frac{g^{\alpha}(k/n)}{{\gamma}}
-\sum_{k=1}^{{\tau}_n}R(Y^{(n)}_{k-1},n^{-\inv{\alpha}}g(k/n))
\right)
}\\
&=
\exp
{\left(
-V^{(n)}(g)
-n^{-1}\sum_{k=1}^{{\tau}_n}\frac{g^{\alpha}(k/n)}{{\gamma}}
-\sum_{k=1}^{{\tau}_n}R(Y^{(n)}_{k-1},n^{-\inv{\alpha}}g(k/n))
\right)
}.
\end{align*}
Let $\displaystyle \Lambda_n= n^{-1}\sum_{k=1}^{\lfloor
  n{\gamma}\rfloor}\frac{f_n^{\alpha}(k/n)}{{\gamma}} 
-
n^{-1}\sum_{k=1}^{{\tau}_n}\frac{f_n^{\alpha}(k/n)}{{\gamma}}
$ 
and write 
 \[
{\mathbb E}\left[\expp{-V^{(n)}(f_n)}\right]=I_1+I_2
\]
with 
$\displaystyle 
I_1
={\mathbb E}\left[\expp{-V^{(n)}(f_n)}
\left(
1-
\expp{
\Lambda_n
}
\right)
\right]
$ and 
$ 
I_2=
{\mathbb E}\left[\expp{-V^{(n)}(f_n)}
\expp{
\Lambda_n
}
\right]
$.

First of all, let  us prove that $I_1$ converges to 0  when $n$ tends to
$\infty $.   Recall that the  functions $f_n$ are uniformly  bounded by
$a$. Thanks to \eqref{eq:upperboundR}, we have 
\[
\E[\expp{-2V^{(n)}(f_n) }]
=\E[\expp{-V^{(n)}(2f_n) }]
=\E\left[A_n(2f_n) 
\expp{n^{-1}\sum_{k=1}^{{\tau}_n}\frac{2^\alpha f_n^{\alpha}(k/n)}{{\gamma}}
+\sum_{k=1}^{{\tau}_n}R(Y^{(n)}_{k-1},n^{-\inv{\alpha}}2f_n(k))}
\right]
\leq M,
\]
where $M$ is a finite constant which does not  depend on $n$. By Cauchy-Schwarz' inequality, we get that 
\[
(I_1)^2
\leq 
\left({\mathbb E}\left[\expp{-V^{(n)}(f_n)}
\left|1-\expp{\Lambda_n} \right|\right]\right)^2
\leq
 {\mathbb E}\left[\expp{-V^{(n)}(2f_n)}\right]
{\mathbb E}\left[\left(1-\expp{\Lambda_n}\right)^2\right]
\leq 
M{\mathbb E}\left[\left(1-\expp{\Lambda_n}\right)^2
\right].
\]
Moreover as $|1-\expp{x}|\leq \expp{|x|}-1$ and $\displaystyle
\Lambda_n \leq   \frac{a^\alpha}{n\gamma } |\lfloor n\gamma\rfloor-
\tau_n|$, we get  
\begin{equation}
\label{eq:maj(1-elambda)2}
{\mathbb E}
\left[\left(1-\expp{\Lambda_n}\right)^2
\right]
\leq 
{\mathbb E}
\left[
\left(1-
\expp{
\frac{|\lfloor n{\gamma}\rfloor -{\tau}_n|a^{\alpha}}{n{\gamma}}
}
\right)^2
\right].
\end{equation}
The quantity $\displaystyle  \frac{|\lfloor n{\gamma}\rfloor
  -{\tau}_n|a^{\alpha}}{n{\gamma}}$ is bounded and goes to $0$ in
probability when $n$ goes to infinity. Therefore, the right-hand side of \reff{eq:maj(1-elambda)2} converges to 0. This implies that
$\lim_{n\rightarrow\infty } I_1=0$. 

Let us now consider the convergence of  $I_2$. Remark that 
\begin{equation*}
I_2={\mathbb E}
\left[
A_n(f_n) 
\expp{
n^{-1}\sum_{k=1}^{\lfloor n{\gamma} \rfloor }\frac{f_n^{\alpha}(k/n)}{{\gamma}}
+\sum_{k=1}^{\tau_n} R(Y^{(n)}_{k-1},n^{-\inv{\alpha}}f_n(k))}
\right].
\end{equation*}
Recall that $f_n$ is bounded by 
$a$  and that ${\mathbb E}[A_n(f_n)]=1$. Using Lemma~\ref{lm:maj_rest},
we get for some $\varepsilon>0$  
\begin{multline}
\label{eq:maj2}
\expp{-C_{\ref{eq:upperboundR}}(a )n^{-{\varepsilon_1}}
 -n^{-1}\sum_{k=1}^{\lfloor n{\gamma}
   \rfloor}\frac{f_n^{\alpha}(k/n)}{{\gamma}}}\\ 
\leq {\mathbb E}
\left[
A_n(f_n)
\expp{
n^{-1}\sum_{k=1}^{\lfloor n{\gamma}
   \rfloor}\frac{f_n^{\alpha}(k/n)}{{\gamma}}
+\sum_{k=1}^{\tau_n} R(Y^{(n)}_{k-1},n^{-\inv{\alpha}}f_n(k))}
\right]\\
\leq \expp{C_{\ref{eq:upperboundR}}(a) n^{-\varepsilon_1} +
n^{-1}\sum_{k=1}^{\lfloor n{\gamma}
   \rfloor}\frac{f_n^{\alpha}(k/n)}{{\gamma}}}.
\end{multline}
As 
$\displaystyle \lim_{n\to\infty
}\inv{n}\sum_{k=1}^{\lfloor n{\gamma}\rfloor}f_n^{\alpha}(k/n)
= \kappa $, we get that $\lim_{n\to \infty }I_2=\expp{ \kappa 
  /\gamma}$, which achieves the proof of \reff{eq:cvVnfn}.  To get
\reff{eq:cvVnf}, notice that 
$\displaystyle 
\kappa =\lim_{n\to\infty }\inv{n}\sum_{k=1}^{\lfloor  n\gamma
  \rfloor}f(k/n)^\alpha 
=\int_0^\gamma f(t)^\alpha \; dt $.
\end{proof}

\section{First approximation of the length of the coalescent
  tree}\label{sec:prelim} 
Let $\alpha\in (1,2)$. We  assume   that $\rho(t)=C_0t^{-\alpha} +O(t^{-\alpha+\zeta})$  for some
  $C_0>0$ and $\zeta>1-1/\alpha$.

Recall that the length of the coalescent tree up to the
$\lfloor nt \rfloor$-th coalescence is, for $t\geq 0$,  given by
\reff{eq:Lnt}. 
The next Lemma  gives an upper bound on the error  when one replaces the
exponential random variables by their mean.
\begin{lem}
\label{lem:Ltilde}
   For $t\geq 0$, let 
\[
\tilde L^{(n)}_{t}=\sum_{k=0}^{\lfloor nt \rfloor \wedge (\tau_n-1)}
\frac{Y^{(n)}_{k} }{g_{Y^{(n)}_{k} }} .
 \]
There exists a finite constant $C_{\ref{eq:DLexp}}$ such that for all
$t\geq 0$, we have 
\begin{equation}
   \label{eq:DLexp}
\E\left[(L^{(n)}_t -\tilde L^{(n)}_{t})^2 \right] \leq
C_{\ref{eq:DLexp}} \begin{cases}
    n^{3-2\alpha} &\text{if }\alpha<3/2,\\
  \log(n)  &\text{if }\alpha=3/2,\\
   1 &\text{if }\alpha>3/2.
 \end{cases} 
\end{equation}
\end{lem}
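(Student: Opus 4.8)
The plan is to exploit the independence of the exponential weights $(E_k)$ from the embedded chain $Y^{(n)}$, which collapses the second moment of the difference onto a deterministic power sum. Writing
\[
L^{(n)}_t - \tilde L^{(n)}_t = \sum_{k=0}^{\lfloor nt\rfloor \wedge (\tau_n-1)} \frac{Y^{(n)}_k}{g_{Y^{(n)}_k}}\,(E_k-1),
\]
I would first condition on the $\sigma$-field generated by $Y^{(n)}$. The upper limit $\lfloor nt\rfloor \wedge (\tau_n-1)$ and all the coefficients $Y^{(n)}_k / g_{Y^{(n)}_k}$ are measurable with respect to this $\sigma$-field, whereas the centred variables $(E_k-1)$ stay independent with mean $0$ and variance $1$. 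Hence all cross terms vanish and
\[
\E\left[(L^{(n)}_t - \tilde L^{(n)}_t)^2 \mid Y^{(n)}\right] = \sum_{k=0}^{\lfloor nt\rfloor \wedge (\tau_n-1)} \left(\frac{Y^{(n)}_k}{g_{Y^{(n)}_k}}\right)^2 \leq \sum_{k=0}^{\tau_n-1} \left(\frac{Y^{(n)}_k}{g_{Y^{(n)}_k}}\right)^2.
\]

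The crucial step is to bound this random sum uniformly. Since at every coalescence at least two blocks merge, the chain $(Y^{(n)}_k)$ is strictly decreasing, so $n=Y^{(n)}_0 > Y^{(n)}_1 > \cdots > Y^{(n)}_{\tau_n} = 1$; in particular the values $Y^{(n)}_0, \ldots, Y^{(n)}_{\tau_n-1}$ are distinct integers in $\{2, \ldots, n\}$. Summing nonnegative terms over this random subset is therefore dominated by the full deterministic sum, so after taking expectations
\[
\E\left[(L^{(n)}_t - \tilde L^{(n)}_t)^2\right] \leq \sum_{j=2}^{n} \left(\frac{j}{g_j}\right)^2,
\]
a bound free of $t$ and of all randomness.

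It then remains only to estimate this elementary sum. By Lemma \ref{lem:dlg} and \reff{eq:dlg} there is a constant $c>0$ with $g_j \geq c\, j^\alpha$ for all $j \geq 2$, whence $(j/g_j)^2 \leq c'\, j^{2-2\alpha}$. Comparing $\sum_{j=2}^n j^{2-2\alpha}$ with $\int_1^n x^{2-2\alpha}\,dx$ produces the three regimes directly: since $2-2\alpha \in (-2,0)$ for $\alpha \in (1,2)$, the exponent exceeds, equals, or falls below $-1$ according as $\alpha < 3/2$, $\alpha = 3/2$, or $\alpha > 3/2$, yielding the bounds $n^{3-2\alpha}$, $\log(n)$, and $O(1)$ respectively.

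The argument is essentially self-contained, and I do not expect a serious obstacle. The only genuinely substantive steps are the conditioning that decouples the exponential noise from the tree shape and the monotonicity of $Y^{(n)}$ that turns the random sum into a deterministic one; once the second moment is reduced to $\sum_{j=2}^n (j/g_j)^2$, the lower bound on $g_j$ from Lemma \ref{lem:dlg} and a routine power-sum comparison deliver the three cases.
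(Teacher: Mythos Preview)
Your proof is correct and follows essentially the same route as the paper: condition on the chain $Y^{(n)}$ to reduce the second moment to $\sum_k (Y^{(n)}_k/g_{Y^{(n)}_k})^2$, bound this random sum by the deterministic $\sum_{j} (j/g_j)^2$ via the monotonicity of $Y^{(n)}$, and then invoke Lemma~\ref{lem:dlg} together with a power-sum estimate to obtain the three regimes. The only cosmetic difference is that you make the monotonicity argument explicit and sum from $j=2$ rather than $j=1$.
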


\begin{proof}
   Conditionally on $\cy$, the random variables $\displaystyle
   \frac{Y^{(n)}_{k} 
   }{g_{Y^{(n)}_{k} }} (E_k-1)$ are independent with zero mean. We
   deduce that 
\begin{align*}
    \E\left[(L^{(n)}_t -\tilde L^{(n)}_{t})^2 |\cy\right]
 &=
 \E\left[\left(\sum_{k=0}^{\lfloor nt \rfloor \wedge (\tau_n-1)}
 \frac{Y^{(n)}_{k} }{g_{Y^{(n)}_{k} }} (E_k-1)\right)^2 |\cy\right]\\
&=
\sum_{k=0}^{\lfloor nt \rfloor \wedge (\tau_n-1)} \left(\frac{Y^{(n)}_{k}
}{g_{Y^{(n)}_{k} }}\right)^2 \\
&\leq \sum_{\ell=1}^{n}  \left(\frac{ \ell}{g_\ell}\right)^2. 
\end{align*}
Thanks to \reff{eq:dlg}, we get 
\[
 \E\left[(L^{n}_t -\tilde L^{(n)}_{t})^2 |\cy\right] \leq c
 \sum_{\ell=1}^{n} \ell^{2-2\alpha} \leq  c \begin{cases}
    n^{3-2\alpha} &\text{if }\alpha<3/2,\\
  \log(n)  &\text{if }\alpha=3/2,\\
    1&\text{if }\alpha>3/2,
\end{cases}
\]
where $c$ is non random. This implies the result.
\end{proof}

\begin{lem}
\label{lem:Lhat}
   For $t\geq 0$, let 
\[
\hat L^{(n)}_{t}= \sum_{k=0}^{\lfloor nt \rfloor \wedge (\tau_n-1)}
\left(Y^{(n)}_{k}\right)^{-\gamma}.
 \]
There exists a finite constant $C_{\ref{eq:hDLn}} $ such that for all $t\geq 0$, we have 
\begin{equation}
   \label{eq:hDLn}
|\tilde L^{(n)}_t -\frac{\hat L^{(n)}_{t}}{C_0\Gamma(2-\alpha)}| \leq
C_{\ref{eq:hDLn}}\; 
\begin{cases}
   n^{2-\alpha-\zeta} &\text{if } \zeta<2-\alpha,\\
\log(n) &\text{if } \zeta=2-\alpha,\\
  1 &\text{if } \zeta>2-\alpha.
\end{cases}
\end{equation}
\end{lem}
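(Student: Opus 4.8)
The plan is to estimate the difference term by term and then sum the resulting bounds. Writing $\ell=Y^{(n)}_k$ and recalling $-\gamma=1-\alpha$, the quantity to control is $\sum_k \left(\frac{Y^{(n)}_k}{g_{Y^{(n)}_k}}-\frac{(Y^{(n)}_k)^{-\gamma}}{C_0\Gamma(2-\alpha)}\right)$, so by the triangle inequality it suffices to bound the single-term discrepancy $\left|\frac{\ell}{g_\ell}-\frac{\ell^{-\gamma}}{C_0\Gamma(2-\alpha)}\right|$ uniformly in $\ell$ and then sum. Note that the sum only involves states with $Y^{(n)}_k\geq 2$ (since the upper limit is $\wedge(\tau_n-1)$, so $k<\tau_n$), which is where $g_\ell>0$.

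For the per-term estimate I would convert the additive bound of Lemma \ref{lem:dlg} into a multiplicative one: writing $g_\ell=C_0\Gamma(2-\alpha)\ell^\alpha(1+r_\ell)$ with $r_\ell=O(\ell^{-\min(\zeta,1)})$, one gets
\[
\frac{\ell}{g_\ell}-\frac{\ell^{-\gamma}}{C_0\Gamma(2-\alpha)}
=\frac{\ell^{-\gamma}}{C_0\Gamma(2-\alpha)}\cdot\frac{-r_\ell}{1+r_\ell}.
\]
For $\ell$ large, $|r_\ell|\leq 1/2$ and the last factor is $O(\ell^{-\min(\zeta,1)})$; for the finitely many small $\ell\geq 2$ the discrepancy is bounded and can be absorbed into the constant. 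Hence $\left|\frac{\ell}{g_\ell}-\frac{\ell^{-\gamma}}{C_0\Gamma(2-\alpha)}\right|\leq c\,\ell^{1-\alpha-\min(\zeta,1)}$ for all $\ell\geq 2$.

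The final step is to sum these bounds. Since the chain $Y^{(n)}$ is strictly decreasing, the states $Y^{(n)}_0,\ldots,Y^{(n)}_{\lfloor nt\rfloor\wedge(\tau_n-1)}$ are distinct elements of $\{2,\ldots,n\}$, so the random sum of per-term bounds is dominated by the deterministic sum $c\sum_{\ell=2}^n \ell^{1-\alpha-\min(\zeta,1)}$, uniformly in $t$. Comparing with the integral of $x^{1-\alpha-\min(\zeta,1)}$ produces the three regimes: when $\zeta<2-\alpha$ (so $\zeta<1$ and the exponent $1-\alpha-\zeta>-1$) the sum is $O(n^{2-\alpha-\zeta})$; when $\zeta=2-\alpha$ the exponent equals $-1$ and the sum is $O(\log(n))$; and when $\zeta>2-\alpha$ (either $\min(\zeta,1)=\zeta$ with exponent $<-1$, or $\min(\zeta,1)=1$ with exponent $-\alpha<-1$) the sum converges and is $O(1)$. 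This gives \reff{eq:hDLn}.

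The argument is essentially bookkeeping, and the uniformity in $t$ is automatic because we dominate by a sum over all states. The only points requiring genuine care are the passage from the additive error of Lemma \ref{lem:dlg} to the relative error displayed above, and checking that the case thresholds line up with $2-\alpha$ rather than with some expression in $\min(\zeta,1)$; this last point uses that $2-\alpha<1$ for $\alpha\in(1,2)$, so that $\min(\zeta,1)=\zeta$ precisely in the two borderline regimes $\zeta\leq 2-\alpha$.
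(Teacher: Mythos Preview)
Your argument is correct and follows the same route as the paper: use Lemma~\ref{lem:dlg} to obtain the per-term bound $\left|\ell/g_\ell-\ell^{-\gamma}/(C_0\Gamma(2-\alpha))\right|\leq c\,\ell^{1-\alpha-\min(\zeta,1)}$, then dominate the random sum by $\sum_{\ell\leq n}\ell^{1-\alpha-\min(\zeta,1)}$ and read off the three regimes. Your version is simply more explicit about the additive-to-multiplicative conversion, the injectivity of $k\mapsto Y^{(n)}_k$, and why the threshold sits at $\zeta=2-\alpha$ rather than at $1$.
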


\begin{proof}
Use \reff{eq:dlg} to get that 
\[
   \tilde L^{(n)}_t -\frac{\hat L^{(n)}_{t}}{C_0\Gamma(2-\alpha)} 
=\sum_{k=0}^{\lfloor nt \rfloor \wedge (\tau_n-1)}
\left(Y^{(n)}_{k}\right)^{-\gamma} O
\left(\left(Y^{(n)}_{k}\right)^{-\min(\zeta,1)} \right).
\]
We deduce that 
\[
| \tilde L^{(n)}_t -\frac{\hat L^{(n)}_{t}}{C_0\Gamma(2-\alpha)}| \leq
c \sum_{\ell=1}^n 
\ell^{-\alpha+1 - \min(\zeta,1)}
\leq  c 
\begin{cases}
   n^{2-\alpha-\zeta} &\text{if } \zeta<2-\alpha,\\
\log(n) &\text{if } \zeta=2-\alpha,\\
  1 &\text{if } \zeta>2-\alpha.
\end{cases}
\]
\end{proof}

\section{Limit distribution of $\hat L_t^{(n)}$}
\label{sec:hatLn}
Let  $\alpha\in (1,2)$ and $\gamma=\alpha-1$. For $t\in [0,\gamma]$, we set 
\[
v(t) =\int_0^t
    \left(1-\frac{r}{\gamma}\right)^{-\gamma} dr.
\] 
\begin{theo}\label{theo:main}
We  assume   that $\rho(t)=C_0t^{-\alpha}
+O(t^{-\alpha+\zeta})$  for some 
  $C_0>0$ and $\zeta>1-1/\alpha$.
Then for all $t\in (0,\gamma)$, we have    that 
\begin{enumerate}
   \item The following convergence in probability holds:
\begin{equation}
   \label{eq:CVLNPv}
n^{-2+\alpha} \hat L^{(n)}_t \; \xrightarrow[n\rightarrow \infty ]{\P}
\;  v(t). 
\end{equation}
   \item  The following convergence in distribution  holds:
\begin{equation}
   \label{eq:CVLNDV}
n^{-1+\alpha -1/\alpha} (  \hat L^{(n)}_t-n^{2-\alpha}
v(t))  \; \xrightarrow[n\rightarrow \infty ]{\text{(d)}}
\; (\alpha-1) \int_0^t dr\; (1-\frac{r}{\gamma})^{-\alpha} V_r.
\end{equation}
\end{enumerate}
\end{theo}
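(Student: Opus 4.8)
The plan is to exploit the exact additive decomposition of the profile $Y^{(n)}$ in terms of the rescaled process $V^{(n)}$ of Corollary \ref{cor:cvVn}. Since $\sum_{i=1}^k(X^{(n)}_i-1/\gamma)=n-k/\gamma-Y^{(n)}_k$, we have, for every $k\le\tau_n$,
\[
Y^{(n)}_k=n\,w(k/n)-n^{1/\alpha}V^{(n)}_{k/n},\qquad w(s):=1-\frac{s}{\gamma},
\]
with $V^{(n)}_{k/n}=n^{-1/\alpha}\sum_{i=1}^k(X^{(n)}_i-1/\gamma)$. Fix $t\in(0,\gamma)$, set $m=\lfloor nt\rfloor$, and note $w\ge w(t)>0$ on $[0,t]$. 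By Proposition \ref{prop:tau_n} we have $\P(m<\tau_n)\to1$, so up to an event of vanishing probability the sum defining $\hat L^{(n)}_t$ runs over $0\le k\le m$, and I work on that event. Since $Y^{(n)}$ is nonincreasing and $Y^{(n)}_m=n\,w(t)-n^{1/\alpha}V^{(n)}_t$ with $V^{(n)}_t$ tight (Corollary \ref{cor:cvVn}) while $n^{1/\alpha-1}\to0$, the event $B_n=\{Y^{(n)}_m\ge\delta n\}=\{Y^{(n)}_k\ge\delta n,\ 0\le k\le m\}$ has probability tending to $1$ for a suitable $\delta\in(0,w(t))$. On $B_n$ the quantities $x_k:=n^{1/\alpha-1}V^{(n)}_{k/n}/w(k/n)$ lie in a fixed compact subinterval of $(-\infty,1)$, so with $g(x)=(1-x)^{-\gamma}$ I may Taylor expand
\[
(Y^{(n)}_k)^{-\gamma}=n^{-\gamma}w(k/n)^{-\gamma}\,g(x_k)=n^{-\gamma}w(k/n)^{-\gamma}\bigl(1+\gamma x_k+R_k\bigr),\qquad |R_k|\le C(\delta)\,x_k^2.
\]
This splits $\hat L^{(n)}_t$ into a deterministic leading term, a linear fluctuation term, and a remainder.

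For the leading term, a Riemann-sum estimate (the integrand $w^{-\gamma}$ being smooth on $[0,t]$) gives $n^{-\gamma}\sum_{k=0}^m w(k/n)^{-\gamma}=n^{2-\alpha}v(t)+O(n^{-\gamma})$; after multiplication by $n^{-1+\alpha-1/\alpha}$ the error is $O(n^{-1/\alpha})$, so this term supplies exactly the centering $n^{2-\alpha}v(t)$. The linear term, upon writing $n^{1/\alpha-1}=n^{-\gamma/\alpha}$ and using $\gamma+1=\alpha$, equals $\gamma\,n^{-\gamma-\gamma/\alpha}\sum_{k=1}^m w(k/n)^{-\alpha}V^{(n)}_{k/n}$; since $-1+\alpha-1/\alpha-\gamma-\gamma/\alpha=-1$, its scaled version is exactly $\gamma\,n^{-1}\sum_{k=1}^m w(k/n)^{-\alpha}V^{(n)}_{k/n}$. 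Here I would perform an Abel summation: inserting $V^{(n)}_{k/n}=n^{-1/\alpha}\sum_{i=1}^k(X^{(n)}_i-1/\gamma)$ and exchanging summation orders gives exactly $V^{(n)}(f_n)$ in the notation of Proposition \ref{prop:cvV}, for the uniformly bounded weight $f_n(s)=\gamma\,n^{-1}\sum_{k=\lceil ns\rceil}^m w(k/n)^{-\alpha}$ on $[0,t]$ and $f_n(s)=0$ for $s>t$. As $f_n\to f_0$ uniformly with $f_0(s)=\gamma\int_{s\wedge t}^{t}w(u)^{-\alpha}\,du$, Proposition \ref{prop:cvV} (equation \reff{eq:cvVnfn}, with $\kappa=\int_0^\gamma f_0^\alpha$) yields convergence in distribution to the variable $\int_0^\gamma f_0\,dV_t$ whose law is characterized by \reff{eq:TLfdVt}. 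A deterministic integration by parts, using $f_0(t)=0$, $f_0'=-\gamma\,w^{-\alpha}$ and $V_0=0$, identifies this limit with $\gamma\int_0^t w(s)^{-\alpha}V_s\,ds=(\alpha-1)\int_0^t(1-s/\gamma)^{-\alpha}V_s\,ds$, the right-hand side of \reff{eq:CVLNDV}.

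The main obstacle is to show the remainder is negligible at the fluctuation scale, that is
\[
n^{-1+\alpha-1/\alpha}\,n^{-\gamma}\sum_{k=0}^m w(k/n)^{-\gamma}R_k\,\ind_{B_n}\;\xrightarrow[n\to\infty]{\P}\;0.
\]
The delicate point is that the limiting increments have infinite variance, so one cannot first pass to the limit; instead I work at finite $n$, where every moment is finite. On $B_n$ the bound $|R_k|\le C(\delta)x_k^2=C(\delta)n^{-2\gamma/\alpha}w(k/n)^{-2}(V^{(n)}_{k/n})^2$ holds, and I would estimate $\E[(V^{(n)}_{k/n})^2]$ by splitting $\sum_{i\le k}(X^{(n)}_i-1/\gamma)$ into its $\cy$-martingale part and its bias. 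The martingale increments are orthogonal and, by \reff{eq:M2}, satisfy $\E[(X^{(n)}_i)^2\mid\cy_{i-1}]\le C(Y^{(n)}_{i-1})^{2-\alpha}\le Cn^{2-\alpha}$, while the bias is controlled through \reff{eq:M1}; this gives $\E[(V^{(n)}_{k/n})^2]\le Cn^{3-\alpha-2/\alpha}$ for $k\le m$. Since $w^{-\gamma-2}$ is integrable on $[0,t]$, summation yields $\E\bigl[\ind_{B_n}\,n^{-\gamma}\sum_{k}w(k/n)^{-\gamma}|R_k|\bigr]\le Cn^{3-2\alpha}$, and the exponent $-1+\alpha-1/\alpha+3-2\alpha=2-\alpha-1/\alpha=-(\sqrt\alpha-1/\sqrt\alpha)^2<0$ shows the scaled remainder vanishes in $L^1$, hence in probability. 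Combining the three pieces by Slutsky's lemma proves \reff{eq:CVLNDV}. Finally, assertion (i) follows either from (ii), the fluctuation being of order $n^{1-\alpha+1/\alpha}=o(n^{2-\alpha})$, or directly from the identity $n^{-2+\alpha}\hat L^{(n)}_t=n^{-1}\sum_{k=0}^{m\wedge(\tau_n-1)}(Y^{(n)}_k/n)^{-\gamma}$ together with the uniform convergence in probability $Y^{(n)}_{\lfloor ns\rfloor}/n\to w(s)$, which exhibits the right-hand side as a Riemann sum converging to $v(t)$.
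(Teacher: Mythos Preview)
Your proof is correct and follows essentially the same route as the paper: both Taylor-expand $(Y^{(n)}_k)^{-\gamma}$ about $(n\,w(k/n))^{-\gamma}$, identify the leading Riemann sum, rewrite the linear fluctuation term via Abel summation as $V^{(n)}(f_n)$ and invoke Proposition~\ref{prop:cvV}, and control the quadratic remainder through the second-moment estimate $\E\bigl[(\sum_{i\le k}(X^{(n)}_i-1/\gamma))^2\bigr]\le c\,n^{3-\alpha}$ derived from \reff{eq:M1} and \reff{eq:M2}. The only cosmetic difference is that you localize at the outset to the single event $B_n=\{Y^{(n)}_m\ge\delta n\}$ (exploiting the monotonicity of $Y^{(n)}$), whereas the paper splits the remainder according to whether $\Delta_{n,k}<1-\varepsilon$ and shows the complementary piece has vanishing probability by the same monotonicity observation.
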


\begin{proof}[Proof of Theorem \ref{theo:main}]
Let $\varepsilon_2\in (0,\gamma)$ and  $t\in (0,\gamma-\varepsilon_2)$.
We use a Taylor expansion to get 
\begin{align}
\nonumber
\hat L^{(n)}_t
&= \sum_{k=0}^{\lfloor nt \rfloor\wedge({\tau}_n-1)}\left(n-\sum_{i=1}^kX^{(n)}_i\right)^{-{\gamma}}\\
\nonumber
&= \sum_{k=0}^{\lfloor nt \rfloor\wedge(\tau_n-1) }\left(n-\frac{k}{{\gamma}}-\sum_{i=1}^k(X^{(n)}_i-\frac{1}{{\gamma}})\right)^{-{\gamma}}\\
\nonumber
&= \sum_{k=0}^{\lfloor nt \rfloor\wedge(\tau_n-1) }\left(n-\frac{k}{{\gamma}}\right)^{-{\gamma}}
\left(1-\Delta_{n,k}\right)^{-{\gamma}}\\
&=I_n+\gamma J_n+ \gamma(\gamma+1) R_n
\label{eq:L=IJR}
\end{align}
with $\displaystyle
\Delta_{n,k}=\frac{\sum_{i=1}^k(X^{(n)}_i- \frac{1}{{\gamma}})}{n-k/{\gamma}}$
and 
\begin{align*}
I_n&=\sum_{k=0}^{\lfloor nt \rfloor\wedge(\tau_n-1) }\left(n-\frac{k}{{\gamma}}\right)^{-{\gamma}},\\
J_n&=\sum_{k=1}^{\lfloor nt \rfloor\wedge(\tau_n-1) }\left(n-\frac{k}{{\gamma}}\right)^{-{\gamma}-1}\sum_{i=1}^k(X^{(n)}_i-\frac{1}{{\gamma}}),\\
R_n&=
\sum_{k=1}^{\lfloor nt \rfloor\wedge(\tau_n-1) }
\left(n-\frac{k}{{\gamma}}\right)^{-{\gamma}}
\int_{0}^{\Delta_{n,k}} 
\left(\Delta_{n,k}-t\right)(1-t)^{-{\gamma}-2}\,dt.
\end{align*}
Notice that a.s. $\Delta_{n,k}<1$,  so that $R_n$ is well defined. 

\noindent
\textbf{Convergence of  $I_n$}.  We first  give an  expansion of  $I_n$ by
considering      $I_n=n^{2-\alpha}      I_{n,1}\ind_{\{nt<\tau_n\}}      +
I_n\ind_{\{nt\geq         \tau_n\}}$         with         $\displaystyle
I_{n,1}=\inv{n}\sum_{k=0}^{\lfloor                nt               \rfloor
}\left(1-\frac{k}{n{\gamma}}\right)^{-{\gamma}}$.   Standard computation
yields
\[
I_{n,1}= v(t)+\inv{n} h_3(n,t),
\]
where   $\displaystyle \sup_{t\in
  (0,\gamma-\varepsilon), n\geq 1} |h_3(n,t)|<\infty$. 
By decomposing according to $\{nt<\tau_n\}$ and $\{nt\geq \tau_n\}$, we
deduce that,  
\[
\P\left(n^{-1+\alpha-1/\alpha} \val{I_n-n^{2-\alpha} v(t)}\geq
  \varepsilon\right) 
 \leq  \P(n^{-1/\alpha}|h_3(n,t)| \geq \varepsilon/2) + \P(nt\geq
 \tau_n). 
\]
According to Lemma \ref{lm:1st_bound_for_tau_n}, $\tau_n/n$ converges in
probability to $\gamma>t$. This implies that 
\begin{equation}
   \label{eq:cvPtn}
 \lim_{n\rightarrow\infty }
\P(nt\geq \tau_n)=0.
\end{equation}
As   $n^{-1/\alpha}|h_3(n,t)|  \leq \varepsilon$  for  $n$ large  enough,
  we 
deduce the following convergence in probability:
\begin{equation}
   \label{eq:cvPI}
n^{-1+\alpha-1/\alpha}  \left(I_n-n^{2-\alpha} v(t)\right)
 \; \xrightarrow[n\rightarrow \infty ]{\P} \; 0. 
\end{equation}


\noindent
\textbf{Convergence of  $J_n$}. To get the convergence of $J_n$, notice that 
\begin{equation}
   \label{eq:Jn}
J_n=\sum_{i=1}^{\lfloor nt \rfloor\wedge(\tau_n-1)
}(X^{(n)}_i-\frac{1}{{\gamma}}) 
\sum_{k=i}^{\lfloor nt \rfloor\wedge(\tau_n-1)}
\left(n-\frac{k}{{\gamma}}\right)^{-\alpha} = n^{1-\alpha} J_{n,1}\ind_{\{nt<\tau_n\}} + J_n\ind_{\{nt\geq
  \tau_n\}}, 
\end{equation}
with  $\displaystyle J_{n,1}= \sum_{i=1}^{\lfloor nt \rfloor\wedge(\tau_n-1)
} f_n(i) (X^{(n)}_i-\frac{1}{{\gamma}})$ and 
$\displaystyle 
f_n(r)= \inv{n} \sum_{j=\lfloor n r \rfloor }^{\lfloor nt \rfloor}
\left(1-\frac{j}{{n\gamma}}\right)^{-\alpha}$. 
The functions $f_n$ are finite and uniformly bounded as for $n\geq
2/\varepsilon_2$, 
\[
0\leq f_n(r)\leq 
f_n(0)=\inv{n} \sum_{k=0 }^{\lfloor nt \rfloor}
\left(1-\frac{k}{{n\gamma}}\right)^{-\alpha} \leq  
\int_0^{\gamma-\varepsilon_2/2}
\left(1-\frac{s}{{\gamma}}\right)^{-\alpha} \; ds <\infty .
\]
Notice that 
\[
\kappa=\lim_{n\to\infty }\inv{n}\sum_{k=1}^{\lfloor n{\gamma}\rfloor}f_n(k)^\alpha =
\int_0^t dr \left( \int_r^t (1- \frac{s}{\gamma} )^{-\alpha} \; ds
\right)^\alpha.
\]


We deduce from Proposition \ref{prop:cvV} that $(n^{-\inv{\alpha}}J_{n,1},
n\geq 2) $ converges in distribution to $\kappa^{1/\alpha} V_1$.
For $\varepsilon'>0$, we have   $\displaystyle \P(\ind_{\{ nt \geq
  \tau_n\}} |J_n|\geq \varepsilon')\leq  \P(nt \geq  \tau_n)$. 
Then we use  \reff{eq:Jn} and \reff{eq:cvPtn}  to conclude that
the following convergence in distribution holds:
\begin{equation}
   \label{eq:cvJNP}
 n^{-1+\alpha-1/\alpha} J_{n}
\; \xrightarrow[n\rightarrow \infty ]{\text{(d)}} \;
\kappa^{1/\alpha} V_1.
\end{equation}

\noindent
\textbf{Convergence of  $R_n$}.
We shall now prove that
$n^{-1+\alpha-1/\alpha} R_n$
converges to $0$ in probability. 
Let $\varepsilon\in (0,\gamma)$. 
We have $\displaystyle R_n= R_{n,1}+ R_{n,2}$, with 
\begin{align*}
R_{n,1}&= \sum_{k=1}^{\lfloor nt
  \rfloor}\left(n-\frac{k}{{\gamma}}\right)^{-{\gamma}}
\ind_{\{k<\tau_n \}} R_{n,1,k},\\
   R_{n,1,k}
&=\ind_{\{\Delta_{n,k}<1-\varepsilon\}} 
\int_{0}^{\Delta_{n,k}} 
\left(\Delta_{n,k}-t\right)(1-t)^{-{\gamma}-2}\,dt,\\
  R_{n,2}
&=\sum_{k=1}^{\lfloor nt
  \rfloor}\left(n-\frac{k}{{\gamma}}\right)^{-{\gamma}} \ind_{\{k<\tau_n \}} \ind_{\{\Delta_{n,k}\geq 1-\varepsilon\}} 
\int_{0}^{\Delta_{n,k}} 
\left(\Delta_{n,k}-t\right)(1-t)^{-{\gamma}-2}\,dt.
\end{align*}

We have for $k\leq n(\gamma-\varepsilon_2)$,
\[
   \E[|R_{n,1,k}|]
\leq c\; \E[(\Delta_{n,k})^2]
\leq \frac{c}{n^2} \; \E\left[\left(\sum_{i=1}^k(X^{(n)}_i-
      \frac{1}{{\gamma}})\right)^2\right] .
\]
Recall   $\cy=(\cy_k,  k\geq 0)$ is the  filtration generated  by  $Y$.
We consider the $\cy$-martingale $N_r=\sum_{j=1}^r \Delta N_r$, with
$\Delta N_r= X^{(n)}_r - \E[X^{(n)}_r|\cy_{r-1}]$. We have 
\[
 \E\left[\left(\sum_{i=1}^k(X^{(n)}_i-
      \frac{1}{{\gamma}})\right)^2\right] 
\leq  2 \E\left[N_k^2 \right]+
2 \E\left[\left(\sum_{i=1}^k(\E[X^{(n)}_i|\cy_{i-1}] -
      \frac{1}{{\gamma}})\right)^2\right] . 
\]
Notice that 
\[
\E\left[N_k^2 \right]
=\E\left[\sum_{i=1}^k (\Delta N_i)^2 \right]
\leq \E\left[\sum_{i=1}^k\E[(X^{(n)}_i)^2|\cy_{i-1}]\right]
\leq \E\left[\sum_{i=1}^{k} (X^{(n)}_i)^2\right].
\]
Using that, conditionally on $\cy_{i-1}$, $X^{(n)}_i$ and $X^{(Y_{i-1})}_1$ have the same distribution, we get that 
\[
\E\left[N_k^2 \right]
\leq \sum_{j=1}^n
\E[(X^{(j)}_1)^2].
\]
Thanks to \reff{eq:M2} and \reff{eq:dlg}, we deduce that 
\[
\E\left[N_k^2 \right] \leq   C_{\ref{eq:M2}} \sum_{j=1}^n
\frac{j^2}{g_j} \leq  c \sum_{j=1}^n j^{2-\alpha} \leq c\; n^{3-\alpha}. 
\]
Using \reff{eq:M1} and \reff{eq:dlg}, we get 
\begin{align*}
   \E\left[\left(\sum_{i=1}^k(\E[X^{(n)}_i|\cy_{i-1}] -
      \frac{1}{{\gamma}})\right)^2\right] 
&\leq
\E\left[\left(\sum_{i=1}^k|\E[X^{(n)}_i|\cy_{i-1}] -
      \frac{1}{{\gamma}}|\right)^2\right] \\
&\leq
  \E\left[\left(\sum_{i=1}^k C_{\ref{eq:M1}}
    \varphi_{Y_{i-1}} \right)^2\right]\\
&\leq  c \; \left(\sum_{j=1}^n
\varphi_j \right)^2  \leq c\; n^{3-\alpha},
\end{align*}
where for the last inequality we used \reff{eq:fn} with $\varepsilon_0>0$
small enough (such that $1+2\varepsilon_0<\alpha$) and the fact that 
$\zeta>1-1/\alpha$ implies $2-2\zeta\leq 3-\alpha$ as $\alpha\in
(1,2)$. 
This implies that $\displaystyle \E[|R_{n,1,k}|] 
\leq  c\;  n^{1-\alpha}$ and therefore $\displaystyle \E[\val{R_{n,1}}]
\leq  c\; n^{3-2\alpha}$. 
In particular, we get that $(n^{-1+\alpha-1/\alpha} R_{n,1}, n\geq 1)$
converges in probability to $0$ since $-1+\alpha-1/\alpha +
3-2\alpha=-(\alpha-1)^2/\alpha  <0$
for  $\alpha>1$.

We now consider $R_{n,2}$.  Suppose that $k\leq \lfloor nt\rfloor -1$
satisfies $\Delta_{n,k} \geq 1-\varepsilon$ on $\{nt<\tau_n\}$. Then on
$\{nt<\tau_n\}$, we have 
\[
\Delta_{n,k+1}
=\Delta_{n,k}+\frac{X^{(n)}_{k+1}-\inv{\gamma}+\frac{\Delta_{n,k}}{\gamma}}
{n-(k+1)/{\gamma}}
\geq \Delta_{n,k}+\frac{X^{(n)}_{k+1}-\frac{\varepsilon}{\gamma}}
{n-(k+1)/{\gamma}}
\geq \Delta_{n,k}, 
\]
where we used that  $\gamma>\varepsilon$  for the first inequality 
and $X^{(n)}_{k+1} \geq 1$ for the last. In particular, on
$\{nt<\tau_n\}$, if $\Delta_{n,k} \geq 1-\varepsilon$ for some $k\leq
\lfloor nt \rfloor $, then  we have $\Delta_{n,\lfloor nt \rfloor} \geq
1-\varepsilon$. This implies  that 
$\displaystyle \ind_{\{nt<\tau_n\}} R_{n,2}
= \ind_{\{ \Delta_{n,  \lfloor nt \rfloor} \geq
  1-\varepsilon\}}\ind_{\{nt<\tau_n\}}  R_{n,2}$.
With the notations of Corollary~\ref{cor:cvVn}, we have 
\[
\{nt<\tau_n\} \cap \{ \Delta_{n,  \lfloor nt \rfloor} \geq
1-\varepsilon\}  \subset \{
V_t^{(n)} \geq  (1-\varepsilon)(n -\frac{\lfloor nt \rfloor}{\gamma} )
n^{-1/\alpha}\}\subset \{
n^{-1+1/\alpha} V_t^{(n)} \geq c\}, 
\]
and then for any $\varepsilon'>0$ 
\begin{align*}
   \P(n^{-1+\alpha-1/\alpha} |R_{n,2}|\geq \varepsilon', \;nt<\tau_n)
&= \P(\ind_{\{ \Delta_{n,  \lfloor nt
    \rfloor} \geq 
  1-\varepsilon\}}n^{-1+\alpha-1/\alpha} |R_{n,2}|\geq \varepsilon',
\;nt<\tau_n )\\
&\leq \P( \Delta_{n,  \lfloor nt \rfloor} \geq
  1-\varepsilon,, \;nt<\tau_n)\\
& \leq  \P(n^{-1+1/\alpha} V_t^{(n)} \geq c).
\end{align*}
Use the  convergence of $V^{(n)}_t$, see  Corollary \ref{cor:cvVn}, to
get that the right-hand side of the last inequality converges to $0$
as $n$ goes to infinity. 
Then notice that $\displaystyle 
   \P(n^{-1+\alpha-1/\alpha} |R_{n,2}|\geq \varepsilon', \;nt\geq
   \tau_n)
\leq     \P(nt\geq \tau_n)$ which converges to $0$ thanks to
\reff{eq:cvPtn}. 

Thus the following convergence in probability holds:
\begin{equation}
   \label{eq:cvRNP}
 n^{-1+\alpha-1/\alpha} R_{n}
\; \xrightarrow[n\rightarrow \infty ]{\P} \;0.
\end{equation}

We deduce from \reff{eq:L=IJR}, \reff{eq:cvPI}, \reff{eq:cvJNP} and
\reff{eq:cvRNP} that 
\begin{equation}
   \label{eq:cvLNd}
 n^{-1+\alpha-1/\alpha} \left( \hat L^{(n)}_t -n^{2-\alpha} v(t) \right)
\; \xrightarrow[n\rightarrow \infty ]{\text{(d)}} \;
\gamma\left[\int_0^t dr \; \left(\int_r^t (1-\frac{s}{\gamma})^{-\alpha}
  ds\right)^\alpha \right]^{1/\alpha} V_1.
\end{equation}
To conclude,  use \reff{eq:TLfdVt} to get that $\displaystyle
\gamma\left[\int_0^t dr 
 \left(\int_r^t (1-\frac{s}{\gamma})^{-\alpha} 
  ds\right)^\alpha \right]^{1/\alpha} V_1$ is distributed as

$\displaystyle \gamma \int_0^t  dV_r \int _r^t
  (1-\frac{s}{\gamma})^{-\alpha} ds$
which in turn is equal to $\displaystyle  \int_0^t dr\;
(1-\frac{r}{\gamma})^{-\alpha} V_r$.

\end{proof}
\section{Proof of the main result}
\label{sec:main}
Let $\displaystyle \alpha_0=  \frac{1+\sqrt{5}}{2}$. Notice that for
$\alpha\in (1, \alpha_0)$, we have
$-1+\alpha-1/\alpha<0$, whereas for $\alpha\geq \alpha_0$,
$-1+\alpha-1/\alpha\geq 0$. Recall $\gamma=\alpha-1$. We define $a(t)$ for $t\in [0,\gamma]$ by 
\[
a(t)=\frac{v(t)}{C_0\Gamma(2-\alpha)},\quad\text{where}\quad v(t) =\int_0^t
    \left(1-\frac{r}{\gamma}\right)^{-\gamma} dr.
\]
We also set $\displaystyle V^*_t = \frac{\alpha-1 }{C_0
  \Gamma(2-\alpha)} \int_0^t  
(1-\frac{r}{\gamma})^{-\alpha} V_r\; dr$ for $t\in (0,\gamma)$. 
\begin{theo}
\label{th:ConvLn}   
We  assume   that $\rho(t)=C_0t^{-\alpha}
+O(t^{-\alpha+\zeta})$  for some 
  $C_0>0$ and $\zeta>1-1/\alpha$.
Then for all $t\in (0,\gamma)$, we have    that 
\begin{enumerate}
   \item The following convergence in probability holds:
\begin{equation}
   \label{eq:CVlNPv}
n^{-2+\alpha}  L^{(n)}_t \; \xrightarrow[n\rightarrow \infty ]{\P}
\;  a(t).
\end{equation}
   \item  If $\displaystyle \alpha\in (1, \alpha_0)$, the following convergence in distribution  holds:
\begin{equation}
   \label{eq:CVlNDV}
n^{-1+\alpha -1/\alpha} \left(  L^{(n)}_t- a(t)n^{2-\alpha} \right)  \; \xrightarrow[n\rightarrow \infty
]{\text{(d)}} 
\; V^*_t.   
\end{equation}
   \item  If $\displaystyle \alpha\in [\alpha_0,2)$, the following convergence in probability  holds: If ${\varepsilon}>0$, 
\begin{equation}
   \label{eq:CVlNDV2}
n^{-{\varepsilon}}\left(  L^{(n)}_t- a(t)n^{2-\alpha} \right)  \; \xrightarrow[n\rightarrow \infty]{\P} \; 0.   
\end{equation}
\end{enumerate}
\end{theo}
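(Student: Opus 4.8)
The plan is to transfer the asymptotics already established for $\hat L^{(n)}_t$ in Theorem \ref{theo:main} to $L^{(n)}_t$, using the two error bounds of Lemmas \ref{lem:Ltilde} and \ref{lem:Lhat} to control the cost of the two successive approximations $L^{(n)}_t\approx \tilde L^{(n)}_t\approx \hat L^{(n)}_t/(C_0\Gamma(2-\alpha))$. Since $a(t)n^{2-\alpha}=n^{2-\alpha}v(t)/(C_0\Gamma(2-\alpha))$, I would write
\[
L^{(n)}_t - a(t) n^{2-\alpha}
= \underbrace{\big(L^{(n)}_t - \tilde L^{(n)}_t\big)}_{\text{Error }1}
+ \underbrace{\Big(\tilde L^{(n)}_t - \frac{\hat L^{(n)}_t}{C_0\Gamma(2-\alpha)}\Big)}_{\text{Error }2}
+ \frac{1}{C_0\Gamma(2-\alpha)}\big(\hat L^{(n)}_t - n^{2-\alpha} v(t)\big),
\]
where the third, ``main'', term is controlled verbatim by Theorem \ref{theo:main}. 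In each of the three regimes I would divide by the relevant power of $n$, show that at that scale both error terms vanish in probability, and conclude by Slutsky's lemma.

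For part (1), dividing by $n^{2-\alpha}$ and invoking Theorem \ref{theo:main}(1) makes the main term converge in probability to $a(t)$. For Error 2, Lemma \ref{lem:Lhat} gives $n^{-2+\alpha}\cdot n^{2-\alpha-\zeta}=n^{-\zeta}\to 0$ (the other two cases being smaller). For Error 1, Lemma \ref{lem:Ltilde} bounds the second moment of $n^{-2+\alpha}(L^{(n)}_t-\tilde L^{(n)}_t)$ by $c\,n^{2\alpha-4}\{n^{3-2\alpha},\log n,1\}$, which tends to $0$ in every case since $\alpha<2$; Chebyshev's inequality then yields the convergence in probability.

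For part (2), with $\alpha\in(1,\alpha_0)$ so that $-1+\alpha-1/\alpha=(\alpha^2-\alpha-1)/\alpha<0$, I would multiply by $n^{-1+\alpha-1/\alpha}$. Theorem \ref{theo:main}(2), divided by $C_0\Gamma(2-\alpha)$, sends the main term to $V^*_t$. For Error 2, Lemma \ref{lem:Lhat} gives the dominant bound $n^{-1+\alpha-1/\alpha}\cdot n^{2-\alpha-\zeta}=n^{1-1/\alpha-\zeta}$, which tends to $0$ exactly by the standing hypothesis $\zeta>1-1/\alpha$ (the boundary cases $\zeta\ge 2-\alpha$ producing only an extra $\log n$ or the strictly negative power $n^{-1+\alpha-1/\alpha}$). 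For Error 1, the second moment is bounded by $n^{2(-1+\alpha-1/\alpha)}$ times the right-hand side of \reff{eq:DLexp}; the binding case is $\alpha>3/2$, where that factor is $O(1)$ and the exponent reduces to $2(\alpha^2-\alpha-1)/\alpha$, negative precisely when $\alpha<\alpha_0$ (the range $\alpha\le 3/2$ being easier). Slutsky's lemma then gives \reff{eq:CVlNDV}.

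Part (3) is the degenerate regime $\alpha\in[\alpha_0,2)$, where $-1+\alpha-1/\alpha\ge 0$; here I would merely show that, scaled by $n^{-\varepsilon}$, all three terms vanish in probability. Since $\alpha\ge\alpha_0>3/2$ and the hypothesis forces $\zeta>1-1/\alpha\ge 2-\alpha$, both Lemmas are in their $O(1)$ cases, so the errors are $O_P(1)$, while Theorem \ref{theo:main}(2) makes the main term $O_P(n^{1-\alpha+1/\alpha})$ with $1-\alpha+1/\alpha\le 0$; multiplying by $n^{-\varepsilon}$ kills all three, giving \reff{eq:CVlNDV2}. The main obstacle, and the conceptual content of the theorem, is the appearance of the golden ratio $\alpha_0$: it is the positive root of $\alpha^2-\alpha-1$, at which the fluctuation scale $n^{1-\alpha+1/\alpha}$ of $\hat L^{(n)}_t$ coincides with the $O(1)$ error from replacing the exponential holding times by their means when $\alpha>3/2$ (Lemma \ref{lem:Ltilde}). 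Below $\alpha_0$ the fluctuations dominate and the stable limit $V^*_t$ survives; at or above $\alpha_0$ the exponential noise is of equal or larger order, so no clean distributional limit persists at that scale and only convergence in probability remains. Verifying this exponent comparison, and checking that the $g_n$-approximation is always subdominant under $\zeta>1-1/\alpha$, are the only points requiring care; the rest is routine use of Slutsky's and Chebyshev's inequalities.
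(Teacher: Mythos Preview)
Your proposal is correct and follows essentially the same approach as the paper: decompose $L^{(n)}_t-a(t)n^{2-\alpha}$ into the two approximation errors controlled by Lemmas \ref{lem:Ltilde} and \ref{lem:Lhat} plus the main term handled by Theorem \ref{theo:main}, and then check in each regime that the errors vanish at the relevant scale. Your treatment is in fact more explicit than the paper's (you prove part (1) separately and spell out the exponent comparisons), but the argument is the same.
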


\begin{proof}
First of all, let us consider the case $\displaystyle \alpha\in (1,
\alpha_0)$.  
  Lemma  \ref{lem:Ltilde}  and  Tchebychev  inequality  imply  that  for
  $\alpha\in  (1,\alpha_0)$,  we   have  the  following  convergence  in
  probability
\[
\lim_{n\rightarrow \infty } n^{-1+\alpha-1/\alpha} |L^{(n)}_t -\tilde
L^{(n)}_t |=0.
\]
This and Lemma \ref{lem:Lhat} imply  that  for
  $\alpha\in  (1,\alpha_0)$,  we   have  the  following  convergence  in
  probability
\[
\lim_{n\rightarrow \infty } n^{-1+\alpha-1/\alpha} |L^{(n)}_t -\frac{\hat 
L^{(n)}_t}{C_0 \Gamma(2-\alpha)} |=0.
\]
The result is then  a direct consequence of Theorem \ref{theo:main}.

For $\displaystyle \alpha\in [\alpha_0,2)$, note that ${\alpha}>3/2$ and
$-1+{\alpha}-1/{\alpha}\geq     0$.     As     $\zeta>1-1/\alpha$    and
$\alpha>\alpha_0$  i.e. $1-1/\alpha>2-\alpha$, we  get $\zeta>2-\alpha$.
We  then use  Lemma  \ref{lem:Ltilde}, Lemma  \ref{lem:Lhat} (only  with
$\zeta>2-\alpha$) and Theorem  \ref{theo:main} to get \reff{eq:CVlNDV2},
and then \reff{eq:CVlNPv}.
\end{proof}
Let $K^{(n)}_t$ be the number of mutations up to the $\lfloor nt \rfloor$-th
coalescence, for $t\in(0,{\gamma})$. conditionally on $L^{(n)}_t$,
$K^{(n)}_t$ is a Poisson r.v. with parameter $\theta L^{(n)}_t$. 
The next Corollary is a consequence of Theorem~\ref{th:ConvLn}. 

\begin{cor}
\label{cor:main}
We assume  that $\rho(t)=C_0t^{-\alpha}  +O(t^{-\alpha+\zeta})$ for  some 
  $C_0>0$ and $\zeta>1-1/\alpha$.
Let  $t\in (0,\gamma)$ and $G$  be a standard Gaussian r.v., independent
of $V$. 
\begin{enumerate}
\item For  ${\alpha}\in(1,\sqrt 2)$,  we have 
\[
n^{-1+\alpha -1/\alpha} (K^{(n)}_t-{\theta}a(t)n^{2-\alpha})
\; \xrightarrow[n\rightarrow \infty]{\text{(d)}} \;  
 \theta V^*_t.
\]
\item For  ${\alpha}\in(\sqrt 2,2)$, we have
\[
n^{-1+\alpha/2} (K^{(n)}_t-{\theta}a(t)n^{2-\alpha}) 
\; \xrightarrow[n\rightarrow \infty]{\text{(d)}} \;  
\sqrt{{\theta}a(t) } G.
\]
\item For ${\alpha}=\sqrt 2$, we have
  $-1+\alpha-\inv{\alpha}=1-\frac{\alpha}{2}$ and 
\[
n^{-1+\alpha -1/\alpha} (K^{(n)}_t-{\theta}a(t)n^{2-\alpha})
\; \xrightarrow[n\rightarrow \infty]{\text{(d)}} \; \theta V^*_t
+ \sqrt{{\theta}a(t)}G.
\]
\end{enumerate}
\end{cor}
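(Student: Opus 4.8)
The plan is to reduce everything to a single scaled characteristic function and to exploit that, conditionally on the length $L^{(n)}_t$, the count $K^{(n)}_t$ is Poisson with parameter $\theta L^{(n)}_t$. Write $c_n$ for the normalisation: $c_n=n^{-1+\alpha-1/\alpha}$ in cases (1) and (3), and $c_n=n^{-1+\alpha/2}$ in case (2); in all cases $c_n\to 0$. Conditioning on $L^{(n)}_t$ and using the Poisson characteristic function $\expp{\lambda(\expp{is}-1)}$, I would obtain, for every $s\in\R$,
\[
\E\!\left[\expp{isc_n(K^{(n)}_t-\theta a(t)n^{2-\alpha})}\right]
=\E\!\left[\expp{\Phi_n}\right],\qquad
\Phi_n=\theta L^{(n)}_t(\expp{isc_n}-1)-isc_n\theta a(t)n^{2-\alpha}.
\]
First I would Taylor expand $\expp{isc_n}-1=isc_n-\tfrac12 s^2c_n^2+r_n$, with $\val{r_n}\leq \val{s}^3c_n^3/6$, to split $\Phi_n=is\theta A_n-\tfrac12 s^2\theta B_n+\theta L^{(n)}_tr_n$, where $A_n=c_n(L^{(n)}_t-a(t)n^{2-\alpha})$ is the rescaled length fluctuation and $B_n=c_n^2L^{(n)}_t$ is the rescaled conditional Poisson variance. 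Since $\val{\theta L^{(n)}_tr_n}\leq(\val{s}^3\theta/6)\,c_nB_n$ and $c_n\to0$, the remainder is negligible once $B_n$ is shown to be bounded in probability.

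Next I would read off the behaviour of $A_n$ and $B_n$ from Theorem~\ref{th:ConvLn}, the whole dichotomy being governed by the sign of $\alpha^2-2$. For $B_n$: in case (2) one has $B_n=n^{-2+\alpha}L^{(n)}_t$, while in cases (1) and (3) one has $B_n=n^{\alpha-2/\alpha}\,(n^{-2+\alpha}L^{(n)}_t)$; combined with the convergence in probability $n^{-2+\alpha}L^{(n)}_t\to a(t)$ of Theorem~\ref{th:ConvLn}(1), this gives $B_n\to a(t)$ in probability when $\alpha\geq\sqrt2$ and $B_n\to0$ in probability when $\alpha<\sqrt2$. For $A_n$: in cases (1) and (3) the factor $c_n$ is exactly the normalisation of Theorem~\ref{th:ConvLn}(2), so $A_n$ converges in distribution to $V^*_t$ (using $\alpha\leq\sqrt2<\alpha_0$); in case (2) the residual power is $n^{1/\alpha-\alpha/2}$ with $1/\alpha-\alpha/2<0$, so $A_n\to0$ in probability, relying on Theorem~\ref{th:ConvLn}(2) when $\alpha<\alpha_0$ and on the subpolynomial bound of Theorem~\ref{th:ConvLn}(3) when $\alpha\geq\alpha_0$.

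Finally I would assemble the pieces by Slutsky's theorem: $\Phi_n$ converges in distribution to $is\theta V^*_t$ in case (1), in probability to the constant $-\tfrac12 s^2\theta a(t)$ in case (2), and in distribution to $is\theta V^*_t-\tfrac12 s^2\theta a(t)$ at the critical value $\alpha=\sqrt2$, the deterministic summand merging with the distributional one precisely because its limit is constant. Because $\mathrm{Re}\,\Phi_n=\theta L^{(n)}_t(\cos(sc_n)-1)\leq0$, one has $\val{\expp{\Phi_n}}\leq1$, and $z\mapsto\expp{z}$ is bounded and continuous on the closed left half-plane; the continuous mapping theorem together with bounded convergence then yields $\E[\expp{\Phi_n}]\to\E[\expp{\Phi}]$. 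Identifying the three limiting characteristic functions $\E[\expp{is\theta V^*_t}]$, $\expp{-s^2\theta a(t)/2}$ and their product gives the three announced limits $\theta V^*_t$, $\sqrt{\theta a(t)}\,G$ and $\theta V^*_t+\sqrt{\theta a(t)}\,G$; the factorisation of the characteristic function in case (3) encodes the independence of $G$ and $V$. The main obstacle I anticipate is the bookkeeping that isolates the threshold $\alpha=\sqrt2$ and shows that the two randomness sources are genuinely comparable there: one must check that the single sign of $\alpha^2-2$ simultaneously decides whether the length fluctuation $A_n$ survives and whether the conditional variance $B_n$ survives, with exactly one of them degenerating on each side. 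The subtle spot is the regime $\alpha\in[\alpha_0,2)$, where only the subpolynomial estimate of Theorem~\ref{th:ConvLn}(3) is available for the length fluctuation; there I would need the strict negativity $-1+\alpha/2<0$ to absorb the arbitrary $n^{\varepsilon}$ loss, which forces one to take $\varepsilon$ small in that estimate.
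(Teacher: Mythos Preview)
Your argument is correct and follows essentially the same route as the paper: condition on $L^{(n)}_t$, use the Poisson characteristic function, separate the length-fluctuation contribution from the conditional-variance contribution, and exploit that the real part of the exponent is nonpositive to pass to the limit. The only cosmetic difference is that the paper first establishes the joint convergence of the pair $\bigl((K^{(n)}_t-\theta L^{(n)}_t)/\sqrt{\theta a(t)n^{2-\alpha}},\,n^{-1+\alpha-1/\alpha}(L^{(n)}_t-a(t)n^{2-\alpha})\bigr)$ via a two-dimensional characteristic function and then recombines, whereas you compute the one-dimensional characteristic function of the final quantity directly; both routes rest on Theorem~\ref{th:ConvLn} in exactly the way you describe.
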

\begin{proof}
Let us compute the characteristic function ${\psi}_n(u,v)$ of the 2-dimensional r.v. $(G_n,H_n)$ with
\[
G_n=\frac{K^{(n)}_t-{\theta}L^{(n)}_t}{\sqrt{{\theta}a(t) n^{2-\alpha}}}\quad
\text{and}\quad 
H_n=n^{-1+\alpha -1/\alpha} \left(  L^{(n)}_t-a(t)n^{2-\alpha} \right).
\]
Using that, conditionally on $L^{(n)}_t$, the law of $K^{(n)}_t$ is a
Poisson distribution with parameter ${\theta}L^{(n)}_t$, we have  
\begin{align*}
{\psi}_n(u,v)
={\mathbb E}\left[\expp{iuG_n}\expp{ivH_n}\right]
&={\mathbb E}
\left[
\expp{
-{\theta}L^{(n)}_t\left(1-\expp{iu/\sqrt{{\theta}a(t)n^{2-\alpha}
  }}+iu/\sqrt{{\theta}a(t) n^{2-\alpha}}\right)
}
\expp{ivH_n}
\right].
\end{align*}

We first consider the case ${\alpha}\in (1,{\alpha}_0)$. 
Using    Theorem~\ref{th:ConvLn},    we    get    that    
\[
-{\theta}L^{(n)}_t\left(1-\expp{iu/\sqrt{{\theta}a(t)n^{2-\alpha}
  }}+iu/\sqrt{{\theta}a(t)  n^{2-\alpha}} \right) 
\]
tends  to   $-u^2/2$  in
probability  and   has  a  non-negative  real   part.   Hence,  applying
Theorem~\ref{th:ConvLn}  again,  we get  that  $(G_n,H_n)$ converges  in
distribution to  $(G,V^*_t)$, where  $G$ is a
standard  Gaussian   r.v. independent of $V$. Notice that
\[
K^{(n)}_t
=\theta a(t)n^{2-\alpha}+{\theta}n^{1-{\alpha}+1/{\alpha}}H_n+\sqrt{\theta
a(t)}n^{1-\alpha/2}G_n.
\]
We have $\sqrt{2}<\alpha_0$. To conclude when ${\alpha}<{\alpha}_0$, use
that $1-\alpha+1/\alpha$ is smaller (resp. equal to) $1-\alpha/2$ if and
only if $\alpha>\sqrt{2}$ (resp. $\alpha=\sqrt{2}$).

Now we consider  ${\alpha}\in [{\alpha}_0,2)$. We write 
\[
n^{-1+\alpha/2} (K^{(n)}_t-{\theta}a(t)n^{2-\alpha})
=
\sqrt{\theta a(t)} G_n + n^{-1+\alpha/2}(L^{(n)}_t-a(t)n^{2-{\alpha}}).
\]
Using Theorem~\ref{th:ConvLn}, we still get that $G_n$ converges in law
to $G$. Moreover, \reff{eq:CVlNDV2} implies that $
n^{-1+\alpha/2}(L^{(n)}_t-a(t)n^{2-{\alpha}})$ converges to 0 in
probability. This gives the result.
\end{proof}



\end{document}